\newcommand{\dd}{\mathrm{d}}
\newcommand{\ddiv}{\mathrm{div\,}}
\newcommand{\D}{\mathbb{D}}
\newcommand{\T}{\mathbb{T}}
\newcommand{\R}{\mathbb{R}}
\newtheorem{thm}{Theorem}
\newtheorem{lem}[thm]{Lemma}
\newtheorem{prop}[thm]{Proposition}
\newtheorem{rem}[thm]{Remark}
\newtheorem{de}[thm]{Definition}
\numberwithin{equation}{section}
\numberwithin{thm}{section}
\title{Discontinuous shear-thickening asymptotic \\ for power-law systems related to compressible flows
 }
\author{Didier Bresch$^\dagger$, Cosmin Burtea$^\S$, Maja Szlenk$^{\dagger,*}$}
\begin{document} 
\maketitle

{
\footnotesize

\centerline{$^\dagger\;$LAMA, UMR CNRS 5127, Universit\'e Savoie Mont Blanc, }
\centerline{B\^{a}t. Le Chablais, Campus Scientifique 73376 Le Bourget du Lac, France}

\bigbreak
\centerline{$^\S\;$ Université Paris Cité and Sorbonne Université, CNRS, IMJ-PRG, F-75013 Paris, France}
\centerline{Batiment Sophie Germain, Bureau 727, 8 place Aurélie Nemours,
75013 Paris;}

\bigbreak
\centerline{$^*\;$Institute of Applied Mathematics and Mechanics, University of Warsaw, }
\centerline{ul. Banacha 2, Warsaw 02-097, Poland}
}

\begin{abstract} In this paper we study the convergence of a power-law model for dilatant compressible fluids to a class of models exhibiting a maximum admissible shear rate, called thick compressible fluids. These kinds of problems were studied previously for elliptic equations, stating with the work of Bhattacharya, E. DiBenedetto and J. Manfredi [Rend. Sem. Mat. Univ. Politec. Torino 1989], and more recently for incompressible fluids by J.F. Rodrigues [J. Math. Sciences 2015]. Our result may be seen as an extension to the compressible setting of these previous works. Physically, this is motivated by the fact that the pressures generated during a squeezing flow are often large, potentially requiring the consideration of compressibility, see M. Fang and R. Gilbert [Z. Anal. Anwend 2004].


 Mathematically, the main difficulty in the compressible setting concerns the strong hyperbolic-parabolic coupling between the density and velocity field.  We obtain two main results, the first concerning the one-dimensional non-stationary compressible power-law system while the second one concerns the semi-stationary multi-dimensional case. Finally, we present an extension in one-dimension for a viscous Cauchy stress with singular dependence on the shear rate.
\end{abstract}

\date{\today}

\maketitle

\section{Introduction and main results}

Discontinuous shear thickening is an intriguing phenomenon that can be observed in non-Newtonian fluids.  More
precisely, this effect can be observed for instance in densely packed suspensions and colloids such as cornstarch in water (volume fraction greater than $0,55-0,58$
depending on the source): at shear-rates greater than a certain critical value there is a reversible step-like viscosity increase covering several
powers of ten in magnitude, see for instance
\cite{Hoffman1972,LaunBungSchmidt1991}.

Understanding this type of behaviour
is important for industrial applications in order to prevent overloading and
jamming \cite{Barnes1989, LaunBungSchmidt1991, PrabhuAjeeth2021} but also in damping devices, armour industry or sports equipement design
\cite{PrabhuAjeeth2021}. This can be regarded as a shear-rate-induced phase
transition from a low viscosity to a high viscosity solid-like state. 

To quantify the stress-strain relationship in the shear thickening regions, in
\cite{BrownJaeger2009} the viscosity shear stress $\tau$ is measured for different
values of shear-rate $\dot{\gamma}$: one observes a sudden increase at a
critical value $\dot{\gamma}_{c}$. Near this critical value the stress-strain
curves are locally fitted to a power law $\tau\approx\dot{\gamma}^{\frac
{1}{\varepsilon}}$ where the parameter $\varepsilon$ depends on packing
fraction and is small. As explained in \cite{BrownJaeger2009}, Newtonian flow
corresponds to $\varepsilon=1$ and a stress divergence corresponds to
$\varepsilon=0$.

 Note that it is well known that gradient or strain-tensor type constraints arise also in the mathematical formulation of several problems in mechanics and in physics, namely in critical state models of plasticity, superconductivity and also in geophysics descriptions (for instance concerning sandpile growth or formation of network of lakes and rivers) leading to variational or quasi-variational inequalities with behind unilateral constraints. In addition, as indicated in \cite{FaGi}, the pressures generated during a squeezing flow are often large and give rise to the possible necessity of taking compressibility into consideration for instance for such situations in the deep oceans.

 \medskip
 
 Motivated by all these considerations, we consider compressible viscous flows for which the density $\varrho_p$ and the velocity $u_p$ are related by the mass equation
 \begin{equation}\partial_t \varrho_p 
 + {\rm div}(\varrho_p u_p) = 0,\label{mass_intro}\end{equation}
 coupled with the momentum equation
 \begin{equation}\partial_t(\varrho_p u_p)
+ {\rm div} (\varrho_p u_p\otimes u_p)
-\ddiv(|\D u_p|^{p-2}\D u_p) + a\nabla\varrho_p^\gamma = 0
\label{momentum_intro}
\end{equation}
where  $\D u = (\nabla u+ (\nabla u)^t) /2$.

 In this paper, we are interested in a mathematical justification of a unilateral constraint on the modulus of strain tensor $\D u$.
More precisely, letting $p$ tend to infinity we prove that we get the unilateral constraint 
\[ |\D u|\le 1, \qquad
   \pi \ge 0 \quad \hbox{with} \quad \pi (1-|\D u|)= 0 \]
where $\pi$ is the Lagrangian multiplier associated to the maximal strain constraint $|\D u| =1.$
The readers interested by such unilateral constraint, related to the $p$-laplacian or to the $p$ system
related to elasticity, may consult the important papers \cite{bhattacharya_et_al1989,BouButDeP, DPEvPr,Evans}. See also \cite{ArEv} and \cite{LeeFolTuck} for discussions on the compression molding for polymers with a link to the limit $p\to\infty$.
One can also consult the interesting paper related to the numerical approximation of the $\infty$-harmonic mapping \cite{KatzPryer}. Concerning papers related to discontinuous shear thickening fluid, we refer to \cite{DLR-St} for the stationary incompressible system. 

Our goal is to extend the results previously obtained for the limit $p\rightarrow \infty$ for the $p$-laplacian or models for incompressible power-law fluids (see \cite{de2014,miranda2016variational,Rodrigues,sazhenkov1998}) to the compressible setting. 
In this case, the situation becomes tricky due to the mixed hyperbolic-parabolic nature of the equations.
We are able to treat two cases~: 
\begin{itemize}
\item The non-stationary case in a one space periodic dimension $\Omega= \T$   where we obtain a PDE for the limit $p\rightarrow\infty$ in the spirit of De Pascale, Evans, Prateli \cite{DPEvPr}.
\item The semi-stationary Stokes system in the multidimensional periodic domain $\Omega= \T^d$ that means the case \eqref{mass_intro}-\eqref{momentum_intro}: The total time derivative term $\partial_t(\varrho_pu_p) + {\rm div}(\varrho_p u_p \otimes u_p)$ in \eqref{momentum_intro} is neglected, while we obtain a limit in the framework of variational solutions. 
\end{itemize}

Since our starting point is the power-law system for a fixed $p$ (before letting $p\to\infty$), let us mention the existing results concerning this case. In the incompressible setting, the results concerning weak solutions start from Ladyzhenskaya \cite{Ladyzhenskaya1967}, and were thoroughly developed later on for different values of $p$. See for instance \cite{DiRuWo,FrMaSt,MaNeRu,wolf}. For the incompressible non-homogenous case, let us mention for example \cite{FrMaRu}. Concerinig the compressible system, the steady case was considered in \cite{BurSzl}. In the context of our work, another interesting result is obtained in \cite{FLM} for viscous stress tensors with a singular dependence with respect to the shear-rate which ensures that the divergence of the velocity field is bounded. We will discuss such models at the end of the paper.

Let us now explain in more detail the systems we consider, and the corresponding limits we obtain when $p\to\infty$. The paper is separated into two parts. First, we consider the non-stationary compressible system in 1D, and then the transport-Stokes system in multiple dimensions. In the last section we finish with an extension in one-dimensional case replacing the power-law stress law by a singular 
stress law.

\bigskip
\noindent {\bf Non-stationary compressible system in 1D and its $p \to \infty$ corresponding model.}
The first case we consider is the following $1$D power-law compressible viscous flows system:%
\begin{equation}
\left\{
\begin{aligned}
&\partial_{t}\varrho_p +\partial_{x}\left(\varrho_p u_p\right) =0,\\
&\partial_{t}\left(\varrho_p u_p\right)  +\partial_{x}\left(\varrho_p
u_p^{2}\right)  -\mu\partial_{x}(\left\vert \partial_{x}u_p\right\vert
^{p-2}\partial_{x}u_p)+a\partial_{x}\varrho_p^{\gamma} =0,
\end{aligned}
\right.\label{p-system}
\end{equation}
in a periodic setting in space and satisfying
\begin{equation}\label{p-systemBC}
\varrho_p\vert_{t=0}= \varrho_{p,0}, \qquad 
  \varrho_p u_p\vert_{t=0}= \varrho_{p,0} u_{p,0}.
\end{equation}
Additionally, the solution satisfies
\begin{equation}
\label{p-system-mass}
\int_{\T}\varrho_{p}\mathrm{d}x= \int_{\T} \varrho_{p,0} dx = M_{p,0}
\qquad \text{ and } \qquad \int_{\T} \varrho_p u_{p}\mathrm{d}x= \int_{\T} \varrho_{p,0}u_{p,0} dx = \widetilde M_{p,0}. 
\end{equation}
The main theorem regarding the system 
 \eqref{p-system}--\eqref{p-system-mass} concerns global existence of a solution with weak regularity and the asymptotic limit $p\to +\infty$, namely 

\begin{thm}\label{MainTh1} 
For $p\geq2$, let us consider $(\varrho_{p,0},u_{p,0})$ and $(\varrho_{0},u_{0})\in
L^{\infty}(\T)\times W^{1,\infty}(\T)$ such that:
\[
\left\{
\begin{array}
[c]{l}%
\displaystyle 
\overline{E_{0}}:=\sup_{p\geq2}\int_{\T}\left(  \frac{\varrho_{p,0}u_{p,0}^{2}%
}{2}+\frac{\varrho_{p,0}^{\gamma}}{\gamma-1}\right)  \mathrm{d}x<\infty.\\
\\
\displaystyle 
0<c_{1}\leq\varrho_{p,0}\leq c_{2}<+\infty 
\text{ and }\left\vert \partial_{x}u_{p,0}\right\vert \leq1,\\
\\
\displaystyle 
\varrho_{p,0}\to\varrho_{0},\text{ }u_{p,0}\to
u_{0}\text{ strongly in }L^{\infty}
\end{array}
\right.
\]
with $\gamma>1$ for some finite constants $c_{1},c_{2}>0$ independent of $p$. Then, for fixed $p$ large enough, there exists a global
weak solution of \eqref{p-system}--\eqref{p-system-mass} such that the energy
inequality holds, i. e.
\[
\sup_{t\in(0,T]}\int_{\T}\left[\frac{1}{2}\varrho_{p}|u_{p}|^{2}+\frac{1}%
{\gamma-1}\varrho_{p}^{\gamma}\right](t)\mathrm{d}x+\int_{0}^{t}\int_{\T}\left\vert \partial_{x}u_{p}\right\vert ^{p}\;\mathrm{d}x\leq\overline
{E_{0}},
\]
and
\begin{gather*}
0<C_{1}\left(  t\right)  \leq\varrho_{p}\left(  t,x\right)  \leq C_{2}\left(
t\right)<+\infty  ,\\
\sup_{0<t<T}\frac{1}{p}\Vert\partial_{x}u_{p}\left(  t\right)  \Vert
_{L^{p}(\T)}^{p}+\left\Vert \partial_{t}u_{p}\right\Vert _{L^{2}(\left(
0,T\right)  \times  \T  )}\leq 
C_{3} \left(  T\right)
<+\infty,
\end{gather*}
with $C_{i}\left(  T\right)  $ $i\in\overline{1,3}$ functions depending on
$\overline{E_{0}},c_{1},c_2$ and on $T>0$ but independent on $p$. 
Moreover, assuming $p\to +\infty$, we have
\begin{equation}
\label{convp}
\begin{aligned}
u_{p} &\rightharpoonup u \hbox{ weakly in }{\mathcal{C}}(0,T;L^{2}(\T))\cap L^{r}(0,T,W^{1,r}(\T))\hbox{ for all }r<+\infty,\\
u_{p} &\rightarrow u \hbox{ strongly in }\text{ }L^{2}\left(  0,T;L^{2}%
(\T)\right)  ,\\
\partial_{t}u_{p} &\rightharpoonup\partial_{t}u \hbox{ weakly-* in }L^{2}%
(0,T;L^{\infty}(\T)),\\
\tau_{p}=|\partial_{x}u_{p}|^{p-2}\partial_{x}u_{p} &\rightharpoonup
\tau \hbox{ weakly in }L^{2}(0,T;L^{2}(\T)),\\
\varrho_{p} &\rightarrow\varrho \hbox{ strongly in }{\mathcal{C}}([0,T];L^{p}%
(\T))\hbox{ for all }p<+\infty
\end{aligned}
\end{equation}
where 
$$ (\varrho,u) \in
L^\infty\big((0,T)\times(\T)\big) 
\times
\Bigl(H^1(0,T;L^2(\T))\cap L^\infty(0,T;W^{1,r}(\T))
\Bigr) 
\hbox{ for any } r<+\infty$$ is a
solution of the variational inequality
$$\int_0^T\int_{\T} \varrho  \, 
\partial_t u \, (v-u)  
+ \varrho u \partial_x u  (v-u) \;\dd x\dd t
- \int_0^T\int_{\T} \varrho^\gamma  
\partial_x (v-u) \;\dd x\dd t
\ge 0 $$
for all $v\in C_{c}^\infty([0,T]\times\T)$ such that $|\partial_x v|\le 1$,
 with
$$
(\varrho, u)_{|_{t=0}}=(\varrho_0,u_0)
$$
where $\varrho$ satisfies
$$\partial_t \varrho + \partial_x(\varrho u)=0 
\hbox{ in a weak sense.} $$

\end{thm}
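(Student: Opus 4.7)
The plan has two stages: first construct, for each fixed $p$ sufficiently large, a weak solution of \eqref{p-system}--\eqref{p-system-mass} with estimates independent of $p$, and then pass to the limit $p\to\infty$ via a convexity argument to obtain the variational inequality.

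For the existence step I would use a standard approximation scheme (regularize the stress to remove the degeneracy of $|\partial_x u|^{p-2}$ near zero, mollify the initial data, and couple a Galerkin scheme for the momentum equation with the continuity equation solved along the smooth flow). The energy inequality is obtained by testing the momentum equation with $u_p$ and gives, in particular, the uniform bound on $\int |\partial_x u_p|^p$. The delicate point is producing pointwise density bounds $0<C_1(T)\le \varrho_p\le C_2(T)$ that are independent of $p$: this is where I would adapt the classical 1D Kazhikhov/Serre argument. The effective viscous flux $F_p:=|\partial_x u_p|^{p-2}\partial_x u_p - a\varrho_p^\gamma$ satisfies $\partial_x F_p=\varrho_p(\partial_t u_p+u_p\partial_x u_p)$; combining this with the identity $\partial_x u_p=-D_t\log\varrho_p$ coming from the mass equation and integrating along the Lagrangian flow associated with $u_p$ produces an integro-differential relation for $\log\varrho_p$, whose driving terms are controlled through the energy bound and the 1D embedding $W^{1,p}(\T)\hookrightarrow L^\infty(\T)$, yielding pointwise upper and lower bounds on $\varrho_p$ independently of $p$. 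Finally, testing the momentum equation with $\partial_t u_p$ converts the viscous contribution into $\frac{1}{p}\frac{\dd}{\dd t}\int|\partial_x u_p|^p$, and the resulting identity together with the density bounds gives the announced control of $\sup_t \frac{1}{p}\|\partial_x u_p\|_{L^p}^p$ and $\|\partial_t u_p\|_{L^2}$.

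With the uniform estimates in hand, the compactness step is essentially routine. Since $\varrho_p$ is bounded in $L^\infty$ and $\partial_t\varrho_p=-\partial_x(\varrho_p u_p)$ is bounded in $L^\infty(0,T;W^{-1,\infty})$, an Aubin--Lions argument gives $\varrho_p\to\varrho$ strongly in $\mathcal{C}([0,T];L^r)$ for every finite $r$. Jensen's inequality yields $\|\partial_x u_p(t)\|_{L^r}\le |\T|^{1/r-1/p}(pC_3)^{1/p}\to|\T|^{1/r}$, so $\partial_x u_p$ is uniformly bounded in $L^\infty(0,T;L^r)$ for each finite $r$ and the weak limit satisfies $|\partial_x u|\le 1$ after sending $r\to\infty$. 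The $L^2$ bound on $\partial_t u_p$ together with the $W^{1,r}$ bound on $u_p$ then produces, by Aubin--Lions, strong convergence $u_p\to u$ in $L^2((0,T)\times\T)$; weak $L^2$ compactness of $\tau_p$ follows by extracting $\partial_x\tau_p=\varrho_p\partial_t u_p+\varrho_p u_p\partial_x u_p+a\partial_x\varrho_p^\gamma$ from the momentum equation and controlling its spatial primitive through periodicity.

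To derive the variational inequality I pick any admissible $v\in C_c^\infty([0,T]\times\T)$ with $|\partial_x v|\le 1$ and test the momentum equation against $v-u_p$. Using the mass equation to rewrite $\partial_t(\varrho_p u_p)+\partial_x(\varrho_p u_p^2)=\varrho_p(\partial_t u_p+u_p\partial_x u_p)$ and integrating by parts in $x$, the resulting identity reads
$$\int_0^T\!\!\int_\T \varrho_p(\partial_t u_p+u_p\partial_x u_p)(v-u_p)\,\dd x\,\dd t - a\int_0^T\!\!\int_\T \varrho_p^\gamma\partial_x(v-u_p)\,\dd x\,\dd t = -\mu\int_0^T\!\!\int_\T \tau_p\,\partial_x(v-u_p)\,\dd x\,\dd t.$$
Convexity of $\xi\mapsto|\xi|^p/p$, whose subdifferential at $\partial_x u_p$ is exactly $\tau_p$, gives the pointwise inequality $\tau_p\,\partial_x(v-u_p)\le (|\partial_x v|^p-|\partial_x u_p|^p)/p\le 1/p$, so the right-hand side is bounded below by $-\mu T|\T|/p\to 0$. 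Every remaining term passes to the limit using the strong convergences of $\varrho_p$, $\varrho_p u_p$ and $\varrho_p^\gamma$ together with the weak $L^2$ convergences of $\partial_t u_p$ and $\partial_x u_p$, producing the announced variational inequality, while $\varrho$ inherits the weak form of the continuity equation from $\varrho_p$ by the strong $\mathcal{C}([0,T];L^r)$ convergence. The main technical obstacle is the uniform-in-$p$ pointwise density bound: the $p$-structure viscosity degenerates on $\{|\partial_x u|<1\}$ and concentrates on $\{|\partial_x u|=1\}$ in the limit, so the 1D Kazhikhov-type argument must be run with quantitative estimates that survive this degeneration and the limit $p\to\infty$; without it, neither the strong compactness of $\varrho_p$ nor the passage to the limit in the convective and pressure terms could be justified.
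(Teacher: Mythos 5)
Your outline is close in spirit to the paper, but it has two genuine gaps, and both occur exactly at the steps you describe as either the "main technical obstacle" or as "routine."

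\textbf{The uniform-in-$p$ density bounds.} You propose to adapt the classical 1D Kazhikhov/Serre argument through the effective viscous flux $F_p=|\partial_x u_p|^{p-2}\partial_x u_p-a\varrho_p^\gamma$ and the identity $\partial_x u_p=-D_t\log\varrho_p$. In the $p=2$ case this closes into a \emph{linear} ODE for $\log\varrho_p$ along characteristics; here substituting produces a nonlinear relation involving $|D_t\log\varrho_p|^{p-2}D_t\log\varrho_p$, the $W^{1,p}\hookrightarrow L^\infty$ constant degenerates as $p\to\infty$, and it is not clear this can be made uniform in $p$. The paper instead runs a \emph{maximum principle on the Cauchy stress} $\sigma_p=\mu|\partial_x u_p|^{p-2}\partial_x u_p-a\varrho_p^\gamma$: differentiating the momentum equation in $x$ and multiplying by $\mu H'(\partial_x u_p)$ with $H(s)=|s|^{p-2}s$, one obtains a parabolic equation for $\sigma_p$ whose source terms have a good sign when $p\geq 1+\gamma$, yielding $\sup_x\sigma_p(t,\cdot)\leq\mu$. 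This controls only the \emph{positive part} of $\partial_x u_p$, which is enough to deduce the \emph{lower} bound on $\varrho_p$ along characteristics. The \emph{upper} bound on $\varrho_p$ then requires a second, separate argument --- a Basov--Shelukhin weighted identity for $\varrho_p^\mu\exp(-\psi_p)$, with $\psi_p$ a time primitive of the flux, whose growth is driven by $-\mu\partial_x u_p+\mu|\partial_x u_p|^{p-2}\partial_x u_p-a\varrho_p^\gamma$, a quantity that is bounded above by $\mu$ uniformly in $p\geq2$ thanks to its sign structure for $\partial_x u_p<0$. Without some version of this two-step mechanism your proof has no route to the pointwise density bounds, and you acknowledge this without resolving it.

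\textbf{Strong convergence of the density.} You claim that Aubin--Lions, from the $L^\infty$ bound on $\varrho_p$ and a bound on $\partial_t\varrho_p$, gives $\varrho_p\to\varrho$ strongly in $\mathcal{C}([0,T];L^r)$. This does not work: $L^\infty(\T)$ does not embed compactly in $L^r(\T)$, and the continuity equation provides no spatial regularity for $\varrho_p$ since the $p$-power viscosity regularizes only $u_p$. Those bounds only give convergence in $C_{\mathrm{weak}}([0,T];L^\gamma)$, insufficient to pass to the limit in the pressure term. The paper obtains strong convergence of $\varrho_p$ by a monotonicity argument: test the momentum equation against both $u_p$ and the limit $u$, subtract, drop a sign-definite term by monotonicity of $\xi\mapsto|\xi|^{p-2}\xi$, and control the nonnegative quantity $X_p(t)=\int_\T\bigl(\varrho_p^\gamma-\varrho^\gamma-\gamma\varrho^{\gamma-1}(\varrho_p-\varrho)\bigr)\,\dd x$ by a Gronwall inequality whose error terms vanish as $p\to\infty$. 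That argument is essential and is missing from your proposal.

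The remainder of your plan --- the energy estimate, the Hoff-type bound (it is cleaner to test against the material derivative $\dot u_p$ rather than $\partial_t u_p$ so that the pressure term pairs with $D_t(\varrho_p^\gamma)$ and closes, which is what the paper does), the consequence $\tau_p\in L^2_tL^\infty_x$, and the passage to the limit in the variational inequality via convexity of $|\xi|^p/p$ --- is in line with the paper.
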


\begin{rem} It is important to remark that \eqref{p-system-mass} contributes to get a bound in time on $\displaystyle\widetilde u_p= \int_{\T} u_p \;\dd x$ in the periodic setting even if $\varrho_p$ vanishes or blows-up due to the total time derivative in the momentum equation. Indeed, we just have to write
$$M_{p,0} \widetilde u_p =  \int_{\T} \varrho_p (\widetilde u_p - u_p)dx
+ \int_{\T} \varrho_p u_p dx.$$
The second quantity is equal to $\widetilde M_{p,0}$ and we can
bound the first quantity using the bounds on $\varrho_p$
and on $\partial_x u_p$ (through Poincar\'e--Wirtinger inequality) given by the energy estimates.
\end{rem}

\begin{rem}
The condition $\left\vert \partial_{x}u_{p,0}\right\vert \leq1$ is needed in
order to obtain estimates uniformly with respect to $p$ but it is not
necessary in order to probe existence at fixed $p$. 
\end{rem}

\begin{rem}
Note that since for fixed $p\geq2$ the density is bounded and far from vacuum, the same is true in the limit, i. e.
$$ 0 < c_1(t) \le \varrho (t,x) \le C_1(t) <+\infty$$
for all $t>0$.
\end{rem}
Using the higher regularity of solutions for a fixed $p$, we also derive the stronger version of Theorem \ref{MainTh1}:

\begin{thm} \label{MainTh12} 
Assume the initial data satisfy the same assumptions as in Theorem {\rm \ref{MainTh1}}. Then, there exists a subsequence converging to $(\varrho,u)$ which satisfies the system
\begin{equation}
\left\{
\begin{array}
[c]{r}%
\partial_{t}\varrho +\partial_{x}\left(  \varrho u\right)  =0,\\
\partial_{t}\left(  \varrho u\right)  +\partial_{x}\left(  \varrho
u^{2}\right)  -\mu\partial_{x}\tau + a \partial_{x}\varrho^{\gamma}=0, \\
|\partial_x u |\le  1 \qquad  \hbox{ and } \qquad  \tau = \pi \partial_x u \hbox{ with }
\pi \ge 0 
\hbox{ with } 
\pi (1-|\partial_x u |) = 0,  \\
\varrho\vert_{t=0} = \varrho_0, \qquad
\varrho u\vert_{t=0}= \varrho_0 u_0
\end{array}
\right.\label{limit-system}
\end{equation}

\end{thm}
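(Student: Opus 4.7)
The plan is to upgrade the variational inequality of Theorem~\ref{MainTh1} to the full PDE formulation \eqref{limit-system}. First I would pass to the limit term-by-term in the mass and momentum equations using the convergences \eqref{convp}: the products $\varrho_p u_p\to \varrho u$ and $\varrho_p u_p^2\to \varrho u^2$ follow from the strong convergence of $\varrho_p$ in $\mathcal{C}([0,T];L^q)$ combined with the strong $L^2$ convergence of $u_p$; the pressure term converges strongly since $\varrho_p^\gamma \to \varrho^\gamma$ by the $L^\infty$ bound plus strong convergence of $\varrho_p$; and $\partial_x\tau_p\to \partial_x\tau$ distributionally from the weak $L^2$ convergence of $\tau_p$. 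This yields the first two lines of \eqref{limit-system}, with the remaining task being the identification $\tau=\pi\,\partial_x u$. The bound $|\partial_x u|\le 1$ passes to the limit from the uniform control $\frac{1}{p}\|\partial_x u_p(t)\|_{L^p}^p\le C_3(T)$ by testing in $L^r$ for fixed $r$ and then letting $r\to\infty$.

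The key step is the identification of $\tau$, for which I would employ a Minty-type monotonicity argument combined with an energy identity. For any $w\in L^\infty((0,T)\times\T)$ with $\|w\|_\infty<1$, monotonicity of $s\mapsto |s|^{p-2}s$ gives $(\tau_p-|w|^{p-2}w)(\partial_x u_p-w)\ge 0$ pointwise. Integrating and using that $|w|^{p-2}w\to 0$ uniformly while $\partial_x u_p$ is bounded in every $L^r$, passage to the limit yields
\[
\liminf_{p\to\infty}\int_0^T\!\!\int_\T\tau_p\,\partial_x u_p\,\dd x\,\dd t\ \ge\ \int_0^T\!\!\int_\T\tau\, w\,\dd x\,\dd t,
\]
and taking the supremum over $w$ one obtains $\liminf_{p}\int \tau_p\,\partial_x u_p\ge \int|\tau|$. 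For the matching upper bound I would test the $p$-momentum equation against $u_p$ (valid at fixed $p$ by the regularity stated in Theorem~\ref{MainTh1}) to derive the energy identity
\[
\mu\int_0^T\!\!\int_\T \tau_p\,\partial_x u_p=E_p(0)-E_p(T),\qquad E_p(t):=\int_\T\Bigl[\tfrac{1}{2}\varrho_p u_p^2+\tfrac{1}{\gamma-1}\varrho_p^\gamma\Bigr](t)\,\dd x.
\]
The strong convergences of Theorem~\ref{MainTh1}, upgraded via Aubin--Lions to $u_p\to u$ in $\mathcal{C}([0,T];L^2(\T))$, then give $E_p(0)\to E(0)$ and $E_p(T)\to E(T)$. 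Writing the same energy identity for the limit system (legitimate thanks to the regularity $(\varrho,u)\in L^\infty\times(H^1(0,T;L^2)\cap L^\infty(0,T;W^{1,r}))$ from Theorem~\ref{MainTh1}) gives $\mu\int\tau\,\partial_x u=E(0)-E(T)$, so that $\int \tau_p\,\partial_x u_p\to \int \tau\,\partial_x u\ge \int|\tau|$.

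Combining with $\int\tau\,\partial_x u\le\int|\tau|$ (immediate from $|\partial_x u|\le 1$) forces equality, so the nonnegative integrand $|\tau|-\tau\,\partial_x u$ vanishes a.e. This means $\partial_x u=\mathrm{sign}(\tau)$ on $\{\tau\ne 0\}$, hence $|\partial_x u|=1$ there; setting $\pi:=|\tau|\ge 0$ one immediately verifies $\tau=\pi\,\partial_x u$ and $\pi(1-|\partial_x u|)=0$ a.e., which yields \eqref{limit-system}. The principal obstacle I anticipate is the rigorous justification of the energy identity for the limit system: since $\tau$ is only a weak $L^2$ object, testing the distributional momentum equation by $u$ and carrying out every integration by parts requires the Sobolev regularity of $u$ from Theorem~\ref{MainTh1} together with a renormalization of the mass equation to properly handle the pressure work term; once this step is granted, the Minty-type identification above goes through without further difficulty.
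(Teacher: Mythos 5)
Your proof is correct and reaches exactly the same endpoint as the paper — the pointwise identity $|\tau|=\tau\,\partial_x u$ a.e., whence $\pi:=|\tau|$ — and the overall strategy (strong form of the limit equations, $|\partial_x u|\le 1$, then a Minty/energy argument to identify $\tau$) is the one used in the text, following De Pascale--Evans--Pratelli. The two proofs differ in how the key inequality $\int|\tau|\le\int\tau\,\partial_x u$ is organized. For the lower bound, the paper goes directly via weak lower semicontinuity of the $L^1$ norm and H\"older: $\int|\tau|\le\liminf_p\int|\tau_p|\le\liminf_p(\int|\partial_x u_p|^p)^{(p-1)/p}$; you instead use a Minty-type comparison $(\tau_p-|w|^{p-2}w)(\partial_x u_p-w)\ge 0$ for $\|w\|_\infty<1$ and take a supremum over $w$. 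For the matching convergence, the paper identifies $\int|\partial_x u_p|^p$ with the duality pairing $\langle f_p,u_p\rangle$, $f_p=\mu\partial_x\tau_p$, and passes to the limit; you make this concrete by writing both the $p$-level and the limit equations as explicit energy balances $\mu\int\tau\,\partial_x u=E(0)-E(T)$ and using the Aubin--Lions upgrade to $u_p\to u$ in $\mathcal{C}([0,T];L^2)$. These are equivalent, with your rendition a bit more explicit about what is actually being passed to the limit. You also correctly flag the one genuinely delicate step that the paper glosses over: rigorously deriving the energy identity for the limit system, which requires the full regularity of $(\varrho,u)$ from Theorem~\ref{MainTh1} together with a renormalization of the mass equation to treat the pressure work term $\int\varrho^\gamma\partial_x u$. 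Your proof is therefore a faithful, slightly more detailed variant of the paper's argument.
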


\begin{rem} Theorem {\rm \ref{MainTh12}}  provides an unilateral constraint on  the strain tensor. It is important to note that $\pi$ may be seen as the Lagrange multiplier problem related to the maximal constraint $|\partial_x u|=1$. Note also that the formulation
\eqref{limit-system} with the regularity found on $(\varrho,u)$ given in Theorem {\rm \ref{MainTh1}} is equivalent to the problem
\end{rem}

\begin{rem} Remark that recently a unilateral constraint related to the maximal packing for a compressible Navier-Stokes  system have been obtained as a singular limit ($\gamma$ tends to infinity) of the usual compressible Navier-Stokes system
$$\partial_t \varrho_\gamma + {\rm div} (\varrho_\gamma u_\gamma) = 0$$
$$\partial_t (\varrho_\gamma u) + {\rm div} (\varrho_\gamma u_\gamma\otimes u_\gamma)
- {\rm div} \sigma_\gamma = 0.$$ 
 where the
stress tensor is given by 
$$\sigma_\gamma = 2\mu \Delta u_\gamma 
+ \lambda \,  {\rm div} u_\gamma {\rm Id}
- a (\varrho_\gamma)^\gamma {\rm Id}$$
with $\mu$ and $\lambda$ positive constants such that
$\lambda + 2\mu/d>0$ and $a$ is a strictly positive constant. The unilateral constraint reads
$$0\le \varrho \le 1, \qquad 
   \pi\ge 0 \hbox{ with } \pi (1-\varrho)= 0$$
where $\pi$ is the Lagrangian multiplier associated to the maximal packing constraint $\varrho =1.$
The interested reader is referred to \cite{LionsMasmoudi} for a mathematical justification considering global weak solutions \`a la Leray. Remark that there exists another way to get Navier-Stokes equations with the unilateral constraint which is to start with a singular pressure $p(\varrho) = \varepsilon \varrho^\gamma/(1-\varrho)^\beta$ for some given $\gamma$ and $\beta$ and let $\varepsilon$ go to zero. See the papers \cite{BPZ,PerZat} for a mathematical justification related to weak solutions
à la Leray. See also \cite{LLMa} where they propose this macroscopic limit system with unilateral constraint starting from studies to understand contact between a grain with a wall.  \end{rem} 

\bigskip

\noindent {\bf Semi-stationary model in multiple dimensions and its 
$p\to +\infty$ system.} In the second part of the paper, we consider the multi-dimensional semi-stationary problem, namely the following compressible Stokes system for a power-law fluid:
\begin{equation}\label{eq_p}
\left\{
    \begin{aligned}
        \partial_t\varrho + \ddiv(\varrho u) &= 0, \\
         - \ddiv\left(|\D u|^{p-2}\D u\right) + \nabla \varrho^\gamma &= 0
    \end{aligned}
    \quad \text{in} \quad [0,T]\times\T^d
    \right.
\end{equation}
with the initial condition 
\begin{equation}\label{BC}
  \varrho\vert_{t=0} = \varrho_0.  
\end{equation}
We will determine the limit system as $p\to\infty$ and show that having a sequence $(\varrho_p,u_p)$ of solutions to (\ref{eq_p}), there exists a subsequence converging to the limit system. However, due to the lower regularity of the solution, we are not able to derive the multidimensional equlivalent of (\ref{limit-system}). Instead, we work in the framework of variational solutions. Testing (\ref{eq_p}) by $u_p$, we obtain the usual energy estimate
\begin{equation}\label{energy_multid} \frac{1}{\gamma-1}\sup_{t\in [0,T]}\int_{\T^d}\varrho_p^\gamma\;\dd x + \int_0^T\int_{\T^d} |\D u_p|^p\;\dd x \leq \frac{1}{\gamma-1}\int_{\T^d}\varrho_0^\gamma\;\dd x. \end{equation}
Moreover, testing the momentum equation by $u_p-v$ and using the relation 
\[ \frac{1}{p}|\D v|^p \geq \frac{1}{p}|\D u|^p + |\D u|^{p-2}\D u:(\D u-\D v), \]
we obtain the variational formulation for (\ref{eq_p}):
\begin{equation}\label{variational} \frac{1}{p}\int_0^t\int_{\T^d} |\D u_p|^p\;\dd x\dd s - \int_0^t\int_{\T^d} \varrho_p^\gamma(\ddiv u_p-\ddiv v)\;\dd x\dd s \leq \frac{1}{p}\int_0^t\int_{\T^d} |\D v|^p\;\dd x\dd s \quad \text{for a. a. } t\in [0,T] \end{equation}
for all $v\in C^\infty_0([0,T]\times\T^d)$. This brings a motivation for the definition of the solution.

\begin{de}
We say that $(\varrho_p,u_p)$ is a variational solution to (\ref{eq_p}), if it satisfies
\begin{enumerate}
\item the energy estimate (\ref{energy_multid}),
\item the continuity equation in the distributional sense: for each $\varphi\in C_0^\infty([0,T]\times\T^d;\R)$
\[ -\int_0^T\int_{\T^d} \varrho_p\partial_t\varphi\;\dd x\dd t - \int_0^T\int_{\T^d}\varrho_pu_p\cdot\nabla\varphi\;\dd x\dd t = 0, \]
\item the momentum equation in the variational sense: for each $v\in C_0^\infty([0,T]\times\T^d;\R^d)$
\[ \frac{1}{p}\int_0^t\int_{\T^d} |\D u_p|^p\;\dd x\dd s - \int_0^t\int_{\T^d} \varrho_p^\gamma(\ddiv u_p-\ddiv v)\;\dd x\dd s \leq \frac{1}{p}\int_0^t\int_{\T^d} |\D v|^p\;\dd x\dd s \quad \text{for a. a. } t\in [0,T] \]
\end{enumerate}
\end{de}

Now, assuming we have sufficient compactness properties of $(\varrho_p,u_p)$, after taking $p\to\infty$ we expect the limit to satisfy the following inequality for a. a. $t\in[0,T]$:
\begin{equation}\label{eq_limit}
    -\int_0^t\int_{\T^d} \varrho^\gamma(\ddiv u-\ddiv v)\;\dd x\dd s \leq I[\D v\mathbbm{1}_{[0,t]}] \quad \text{for all} \quad v\in C_0^\infty([0,T]\times\T^d), 
\end{equation}
where
\[ I[\varphi] = \left\{\begin{aligned}
    &0, \quad \text{if} \quad \|\varphi\|_{L^\infty([0,+\infty)\times\T^d)} \leq 1, \\
    &+\infty \quad \text{otherwise}.
\end{aligned}\right.\]

Therefore our main result for the multidimensional case states as follows:

\begin{thm}\label{th_multid}
    Assume that $(\varrho_p,u_p)$ is a weak solution to \eqref{eq_p}-\eqref{BC}  with the initial data $\varrho_0\in L^\infty(\T^d)$, satisfying the energy bound
    \begin{equation}\label{energy} \frac{1}{\gamma-1}\sup_{t\in[0,T]}\int_{\T^d}\varrho_p^\gamma\;\dd x + \int_0^T\int_{\T^d} |\D u_p|^p\;\dd x\dd t \leq \frac{1}{\gamma-1}\int_{\T^d} \varrho_0^\gamma. \end{equation}
    Then, there exist $(\varrho,u)$ 
    satisfying
    \[ \varrho\in L^\infty([0,T]\times\T^d), \quad u\in L^q(0,T;W^{1,q}(\T^d)) \quad \text{for each} \quad q<\infty, \quad \text{and} \quad |\D u|\leq 1 \quad \text{a. e.} \]
    such that up to a subsequence we have the convergence
    \[\begin{aligned}
        \varrho_p \to \varrho \quad &\text{strongly in} \quad L^\gamma(0,T;L^\gamma(\T^d)), \\
        u_p \rightharpoonup u \quad &\text{weakly in} \quad L^d(0,T;W^{1,d}(\T^d)).
    \end{aligned}\]
    Moreover, $(\varrho,u)$ satisfies the variational inequality (\ref{eq_limit}) and the continuity equation 
    \[ \partial_t\varrho + \ddiv(\varrho u) = 0, \quad \varrho_{|_{t=0}} = \varrho_0 \]
    in the sense of distributions.
\end{thm}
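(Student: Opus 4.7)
The plan is to follow the standard compactness roadmap for compressible flows à la Lions–Feireisl, adapted here to the degenerate $p$-Stokes viscous stress and to the passage $p\to\infty$. The three tasks are: (i) extracting uniform bounds and recovering the constraint $|\D u|\le 1$ in the limit, (ii) proving strong $L^\gamma$ compactness of the density, and (iii) passing to the limit in the continuity equation and in the variational inequality \eqref{variational}.

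\textbf{Uniform bounds and the unilateral constraint.} From the energy bound \eqref{energy}, I have $\varrho_p$ uniformly bounded in $L^\infty(0,T;L^\gamma(\T^d))$ and $\|\D u_p\|_{L^p((0,T)\times\T^d)}\le C$ with $C$ independent of $p$. By Hölder's inequality, for every fixed $q<\infty$ and all $p>q$,
\[
\|\D u_p\|_{L^q((0,T)\times\T^d)}^{q}\le C^{q/p}(T|\T^d|)^{1-q/p},
\]
so $\D u_p$ is uniformly bounded in $L^q$. After normalising $u_p$ to have zero spatial mean (allowed since the momentum equation is invariant under constants), Poincaré–Wirtinger combined with Korn's inequality yields $u_p$ uniformly bounded in $L^q(0,T;W^{1,q}(\T^d))$. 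Extracting subsequences, $\varrho_p \rightharpoonup^\ast \varrho$ in $L^\infty(L^\gamma)$ and $u_p\rightharpoonup u$ in $L^q(W^{1,q})$. Weak lower semicontinuity of the $L^q$ norm then gives $\|\D u\|_{L^q}^{q}\le T|\T^d|$ uniformly in $q$; the Chebyshev argument $|\{|\D u|>a\}|\le T|\T^d|/a^q \to 0$ for any $a>1$ forces $|\D u|\le 1$ almost everywhere.

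\textbf{Strong compactness of the density — the main obstacle.} The heart of the proof is to obtain $\varrho_p\to\varrho$ strongly in $L^\gamma$, so that $\varrho_p^\gamma\to\varrho^\gamma$ in $L^1$. I would adapt the Lions effective viscous flux technique to the $p$-Stokes setting. Testing the momentum equation against a Bogovski-type function $\nabla\Delta^{-1}[T_k(\varrho_p)-\langle T_k(\varrho_p)\rangle]$ (with $T_k$ a truncation) and using the Calderón–Zygmund estimate, together with the uniform bound $\|\tau_p\|_{L^{p/(p-1)}}\le C$ on $\tau_p:=|\D u_p|^{p-2}\D u_p$ inherited from the energy estimate, one obtains an identity relating $\int \varrho_p^\gamma T_k(\varrho_p)$ to a term of the form $\int \tau_p:\D\nabla\Delta^{-1}[T_k(\varrho_p)-\langle T_k(\varrho_p)\rangle]$, whose right-hand side is amenable to passing to the weak limit thanks to the compactness of $\nabla\Delta^{-1}$. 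Combined with the DiPerna–Lions renormalisation of the continuity equation (which applies because $u_p\in L^q(W^{1,q})$ for every $q<\infty$), this forces the oscillation defect measure of the density to vanish, hence $\overline{\varrho^\gamma}=\varrho^\gamma$ a.e., which upgrades weak-$\ast$ to strong convergence. As a prerequisite, testing against $\nabla\Delta^{-1}(\varrho_p^\theta-\langle\varrho_p^\theta\rangle)$ for a suitable small $\theta>0$ provides the improved integrability $\varrho_p\in L^{\gamma+\eta}$ needed below. This step is the truly hard one: the nonlinear degenerate stress $|\D u_p|^{p-2}\D u_p$ is only weakly compact in $L^{p/(p-1)}$, a space that degenerates to $L^1$ as $p\to\infty$, so one must keep the integrability of the $\tau_p$-test pairing uniform in $p$, whereas all classical effective-flux arguments use a fixed Laplacian viscous term.

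\textbf{Passage to the limit.} Once strong convergence of $\varrho_p$ is available, the flux $\varrho_p u_p$ converges in some $L^r$ with $r>1$ (strong density times weakly convergent Sobolev velocity), and the continuity equation passes to the limit in distributions. In the variational inequality \eqref{variational}, the terms $\frac{1}{p}\int|\D u_p|^p$ and $\frac{1}{p}\int|\D v|^p$ tend to zero whenever $\|\D v\|_{L^\infty}\le 1$, while for $\|\D v\|_{L^\infty}>1$ the right-hand side blows up — this matches exactly the indicator $I[\D v\mathbbm{1}_{[0,t]}]$. The cross term $\int \varrho_p^\gamma(\ddiv u_p - \ddiv v)$ converges to $\int \varrho^\gamma(\ddiv u-\ddiv v)$ because the improved integrability $\varrho_p\in L^{\gamma+\eta}$ gives $\varrho_p^\gamma\to\varrho^\gamma$ strongly in some $L^{1+\eta'}$, which pairs with the weak convergence of $\ddiv u_p$ in the dual $L^{(1+\eta')'}$. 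Collecting the three contributions gives exactly \eqref{eq_limit}, concluding the proof.
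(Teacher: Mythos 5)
Your first step (uniform bounds and the Chebyshev argument for $|\D u|\le 1$) is correct and is essentially what the paper does in Proposition~\ref{Du_bdd}. But your central step — strong $L^\gamma$ compactness of the density via an adapted effective-viscous-flux argument — has a gap that you yourself flag but do not close, and it is worth being precise about why the obstacle is structural rather than merely technical. The Lions–Feireisl effective-flux quantity $p(\varrho)-(\lambda+2\mu)\ddiv u$ exists because the viscous stress is \emph{linear} in $\D u$: applying $(-\Delta)^{-1}\ddiv$ to the momentum equation collapses the stress into a scalar multiple of $\ddiv u$, and the commutator in $\int\tau_p:\D\nabla\Delta^{-1}[T_k(\varrho_p)]$ can be killed by compensated compactness. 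For $\tau_p=|\D u_p|^{p-2}\D u_p$ there is no such scalar reduction, so the object you would need to identify in the weak limit, $\overline{\tau:\D\nabla\Delta^{-1}[T_k(\varrho)]}$, has no factorised structure. On top of that, $\tau_p$ is only bounded in $L^{p'}$ with $p'\to1$, so the pairing degenerates as $p\to\infty$; and the Bogovskii-type bootstrap you invoke for $\varrho_p\in L^{\gamma+\eta}$ needs the viscosity to absorb part of the pressure, which is not uniform in $p$. Finally, in your last paragraph the claim that improved integrability of $\varrho_p$ gives strong convergence of $\varrho_p^\gamma$ is circular: higher integrability alone never upgrades weak to strong.

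What actually makes the theorem tractable — and what the paper exploits — is the \emph{limit} constraint $|\D u|\le1$, not an effective flux identity at fixed $p$. Once $|\D u|\le1$ a.e., $\ddiv u\in L^\infty$ and the renormalised continuity equation (which is available since $u\in L^q W^{1,q}$ for every $q$) gives $\varrho\in L^\infty([0,T]\times\T^d)$ (Proposition~\ref{rho_bdd}). Then one tests the variational formulation \eqref{variational} by $u_p$ and by the limit $u$, uses monotonicity of $\mathbb{S}_p$, and the renormalised equation for $\varrho^\gamma$, to produce a differential inequality for the convexity defect
\[
X_p(t)=\int_{\T^d}\varrho_p^\gamma-\varrho^\gamma-\gamma\varrho^{\gamma-1}(\varrho_p-\varrho)\;\dd x\ge0,
\]
of the form $X_p'\le (\gamma-1)\|\ddiv u\|_{L^\infty}X_p+\varepsilon_p+g_p'$ with $\varepsilon_p\to0$, $g_p\to0$. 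Because $\ddiv u\in L^\infty$, Gronwall closes and $X_p(t)\to0$ a.e., which combined with weak convergence yields strong $L^\gamma$ convergence. This route bypasses the effective viscous flux entirely and is what makes the argument uniform in $p$. I would recommend you abandon the effective-flux adaptation for this problem and instead build the strong compactness argument around the a priori information that the limit velocity is Lipschitz-constrained: that is the genuinely new ingredient here.
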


Note that the construction of weak solutions to (\ref{eq_p}) remains an open problem, and therefore Theorem \ref{th_multid} is not constructive. However, by including an additional regularizing term for $\ddiv u$, using the same compactness method as in Theorem \ref{th_multid} we are able to construct the solution directly to the limit problem (\ref{eq_limit}). We therefore have
\begin{thm}\label{th_multid_ex}
    Let $\varrho_0\in L^\infty(\T^d)$. There exist $(\varrho,u)$ such that 
    \begin{enumerate}
        \item $\varrho\in L^\infty(0,\infty;L^\gamma)$ and $\varrho\in L^\infty([0,T]\times\T^d)$ for each $T>0$, 
        \item $u\in L^r(0,\infty;W^{1,r})$ for each $r<\infty$ and $|\D u|\leq 1$ a. e.,
        \item $(\varrho,u)$ satisfy the continuity equation 
        \[ \partial_t\varrho + \ddiv(\varrho u) = 0 \]
        in the sense of distributions, and moreover
        \begin{equation}\label{eq_limit2}
    -\int_0^t\int_{\T^d} \varrho^\gamma(\ddiv u-\ddiv v)\;\dd x\dd s \leq I_t[v] \quad \text{for all} \quad v\in C_0^\infty([0,+\infty)\times\T^d), 
\end{equation}
where
\[ I_t[v] = \left\{\begin{aligned}
    &0, \quad \text{if} \quad \|\D v\|_{L^\infty([0,t]\times\T^d)} \leq 1, \\
    &+\infty \quad \text{otherwise}.
\end{aligned}\right.\]
    \end{enumerate}
\end{thm}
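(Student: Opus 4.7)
The plan is to circumvent the open problem of constructing weak solutions to \eqref{eq_p} by adding a regularizing term of $p$-Laplacian type on $\ddiv u$, to solve the regularized system at fixed $p$ by standard methods, and then to apply the same compactness argument as in Theorem~\ref{th_multid} to pass to the limit. Concretely, for each $p\geq 2$ I would study
\[
\partial_t\varrho_p+\ddiv(\varrho_p u_p)=0, \qquad -\ddiv(|\D u_p|^{p-2}\D u_p) - \nabla\bigl(|\ddiv u_p|^{p-2}\ddiv u_p\bigr) + \nabla \varrho_p^\gamma = 0,
\]
with $\varrho_p|_{t=0}=\varrho_0$. The additional divergence term supplies $L^p$-control on $\ddiv u_p$, which renders the momentum operator strictly monotone and coercive on all of $W^{1,p}(\T^d)$ (not merely modulo constants), and allows a construction at fixed $p$ through a Schauder fixed-point iteration: given $\varrho$, recover $u$ from the monotone elliptic problem; feed $u$ into the continuity equation to get a renormalized $\varrho$ \`a la DiPerna--Lions; close the loop by compactness. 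The $u_p$-test then delivers the uniform-in-$p$ estimate
\[
\frac{1}{\gamma-1}\sup_{t\in [0,T]}\int_{\T^d}\varrho_p^\gamma\;\dd x + \int_0^T\int_{\T^d}\bigl(|\D u_p|^p+|\ddiv u_p|^p\bigr)\;\dd x\dd t \leq \frac{1}{\gamma-1}\int_{\T^d}\varrho_0^\gamma\;\dd x,
\]
which refines \eqref{energy} with an extra $L^p$-bound on $\ddiv u_p$.

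Next I would pass to the limit $p\to\infty$ as in Theorem~\ref{th_multid}. Up to a subsequence, $u_p\rightharpoonup u$ in $L^q(0,T;W^{1,q}(\T^d))$ for every $q<\infty$, and $|\D u|\leq 1$ a.e.\ follows from the interpolation $\|\D u\|_{L^q}\leq \liminf_p \|\D u_p\|_{L^p}\,|\T^d|^{(p-q)/(pq)}$ together with $q\to\infty$. Testing the regularized momentum equation by $u_p-v$ for $v\in C_0^\infty([0,+\infty)\times\T^d)$ with $\|\D v\|_{L^\infty}\leq 1$, and invoking the convexity identity in \eqref{variational}, yields
\[
\frac{1}{p}\int_0^t\int_{\T^d}\bigl(|\D u_p|^p+|\ddiv u_p|^p\bigr)\;\dd x\dd s - \int_0^t\int_{\T^d}\varrho_p^\gamma(\ddiv u_p-\ddiv v)\;\dd x\dd s \leq \frac{1}{p}\int_0^t\int_{\T^d}\bigl(|\D v|^p+|\ddiv v|^p\bigr)\;\dd x\dd s.
\]
Since $|\ddiv v|\leq \sqrt{d}\,|\D v|\leq \sqrt{d}$ uniformly on the test class, the right-hand side vanishes as $p\to\infty$, the first left-hand term is nonnegative and can be dropped, and combining the strong $L^\gamma$-convergence of $\varrho_p$ (hence of $\varrho_p^\gamma$ in $L^s$ for every $s<\infty$) with the weak convergence of $\ddiv u_p$ in $L^q$ recovers \eqref{eq_limit2}.

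The hard part will be securing the strong $L^\gamma$-compactness of $\varrho_p$, since only a variational inequality for the momentum is available and the Lions--Feireisl effective viscous flux identity is not directly applicable. I would exploit the uniform-in-$p$ $L^p$-bound on $\ddiv u_p$ (valid for every finite exponent) to propagate an $L^\infty$-bound on $\varrho_p$ through renormalized solutions of the continuity equation; combined with $\varrho_p u_p\in L^\infty_t L^q_x$, this controls $\partial_t\varrho_p=-\ddiv(\varrho_p u_p)$ in an appropriate negative Sobolev space, so Aubin--Lions delivers the required strong $L^\gamma$-compactness. Passing to the limit in the continuity equation is then standard, completing the construction.
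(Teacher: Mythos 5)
Your approach departs from the paper's in two places, and each departure introduces a genuine gap.

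\textbf{Existence at fixed $p$.} You replace the singular barrier $\lambda(|\ddiv u|)\ddiv u$ used in the paper (borrowed from \cite{FLM}, and chosen precisely because that reference constructs global variational solutions for it) by a polynomial term $|\ddiv u_p|^{p-2}\ddiv u_p$, and you assert that a Schauder iteration closes. This is not obvious: the transport--elliptic coupling is not visibly easier than for the unregularized system \eqref{eq_p}, whose solvability the paper explicitly leaves open, and to run Schauder you would need a function space in which the composition $\varrho\mapsto u\mapsto\varrho$ is compact --- the extra divergence term does not supply one. A more consequential difference is that the singular $\lambda$ enforces $|\ddiv u_p|<1/b$ \emph{pointwise at fixed $p$} (since the variational solution satisfies $\int\Lambda(|\ddiv u_p|)<\infty$), whereas your $p$-Laplacian regularization of $\ddiv u_p$ only controls $\ddiv u_p$ in $L^p$ and yields no pointwise bound before $p\to\infty$.

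\textbf{Strong compactness of $\varrho_p$.} This is the decisive gap. Although you announce in your opening paragraph that you will reuse the compactness argument of Theorem~\ref{th_multid}, what you actually describe is Aubin--Lions, which cannot give strong $L^\gamma((0,T)\times\T^d)$ convergence from the bounds available. A uniform $L^\infty(0,T;L^\gamma)$ estimate together with a bound on $\partial_t\varrho_p$ in a negative Sobolev space yields at best $\varrho_p\to\varrho$ in $C([0,T];W^{-1,q})$ and in $C([0,T];L^\gamma_{\mathrm{weak}})$ --- strong in time but only weak in space. Even a uniform $L^\infty_{t,x}$ bound would not help, since $L^\infty(\T^d)\hookrightarrow L^\gamma(\T^d)$ is not compact: a sequence such as $1+\sin(px)$ is uniformly bounded and converges weakly, yet not strongly, in $L^\gamma$. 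Moreover, your proposed route to that $L^\infty$ bound on $\varrho_p$ is itself blocked: the energy estimate controls $\ddiv u_p$ in $L^q$ for every finite $q$ uniformly in $p$, but not in $L^\infty$ at fixed $p$, so the renormalized identity
\[
\frac{\dd}{\dd t}\int_{\T^d}\varrho_p^q\;\dd x = -(q-1)\int_{\T^d}\varrho_p^q\ddiv u_p\;\dd x
\]
does not close at any fixed power $q$ (the paper's Proposition~\ref{rho_bdd} applies this Gronwall step only to the \emph{limit} $\varrho$, where $|\ddiv u|\leq 1$ holds). The paper obtains the strong $L^\gamma$ convergence by a different mechanism, reused verbatim from the proof of Theorem~\ref{th_multid}: test the variational inequality with $u$ (admissible because $|\D u|\leq 1$ and $b<1$), exploit monotonicity of $\mathbb{S}_p$, convexity of $z\mapsto z^\gamma$ and of $\Lambda$, and the renormalized continuity equation for the limit, and close with Gronwall on
\[
X_p(t)=\int_{\T^d}\bigl(\varrho_p^\gamma-\varrho^\gamma-\gamma\varrho^{\gamma-1}(\varrho_p-\varrho)\bigr)(t,x)\;\dd x\geq 0
\]
with a remainder $\varepsilon_p(t)\to 0$. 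Replacing this with Aubin--Lions discards the only ingredient that controls spatial oscillations of the density, so your final step would not go through.
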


\section{Proof of Theorem \ref{MainTh1}}\label{1d_section} The local existence of strong solutions can be achieved using a classical Galerkin method or fixed point theorem applied to the lagrangian formulation. We skip the details and we refer the reader for example to \cite{KaMaNe}. The main points is for us to derive appropriate uniform bounds with respect to $p$ to define weak solutions \`a la Hoff for fixed $p$ and then pass to the limit with respect to $p$. 

\subsection{Uniform estimates} 

The first step is to show the following {\it a priori} estimates:
\begin{prop}Let $(\varrho_p,u_p)$ be a smooth solution of \eqref{p-system}. Then
\begin{equation}
\mu\left\vert \partial_{x}u_{p}\right\vert ^{p-2}\partial_{x}u_{p}
-a\varrho_{p}^{\gamma}\leq\mu\left\vert \partial_{x}u_{p,0}\right\vert
^{p-2}\partial_{x}u_{p,0}-a\varrho_{p,0}^{\gamma}\leq\mu,
\end{equation}
and
\begin{equation}  \label{estim_density}%
\frac{c_{1} e^{-2t}}{\max\Bigl\{1, \bigl(\frac{a}{\mu}\bigr)^{1/\gamma} c_2
\Bigr\} }
\leq\varrho_{p}\left(
t,x\right)  \leq c_2 \exp\left[\frac{\overline{E_0}}{\mu} + \left(\frac{2+\gamma}{\mu}\overline{E_0} + 1+\frac{1}{p}\right)t\right].
\end{equation}
\end{prop}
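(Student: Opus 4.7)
The plan is to establish the effective viscous flux bound first (via a parabolic maximum principle) and then use it to derive both density bounds.

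\textbf{Flux bound.} Let $F_p := \mu|\partial_x u_p|^{p-2}\partial_x u_p - a\varrho_p^\gamma$. The momentum equation rewrites as $\varrho_p D_t u_p = \partial_x F_p$ with $D_t := \partial_t + u_p\partial_x$. I would compute $D_t F_p$ using $D_t\varrho_p^\gamma = -\gamma\varrho_p^\gamma\partial_x u_p$ (from continuity) and $D_t\partial_x u_p = \partial_x D_t u_p - (\partial_x u_p)^2 = \partial_x(\partial_x F_p/\varrho_p) - (\partial_x u_p)^2$ (from momentum). Substitution yields a degenerate parabolic equation
\[
D_t F_p - \frac{\mu(p-1)|\partial_x u_p|^{p-2}}{\varrho_p}\partial_x^2 F_p + \text{(first-order terms)} = -\mu(p-1)|\partial_x u_p|^p + a\gamma\varrho_p^\gamma \partial_x u_p.
\]
By the parabolic maximum principle, at a point where $F_p$ attains its supremum with $F_p>0$, the parabolic terms on the left are nonpositive, and the identity $\mu|\partial_x u_p|^{p-2}\partial_x u_p = F_p+a\varrho_p^\gamma > 0$ forces $\partial_x u_p > 0$. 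Substituting $\mu(\partial_x u_p)^{p-1} = F_p+a\varrho_p^\gamma$ rewrites the right-hand side as $-\partial_x u_p\bigl[(p-1)F_p + a(p-1-\gamma)\varrho_p^\gamma\bigr]$, which is $\leq 0$ as soon as $p\geq\gamma+1$. Hence any positive maximum of $F_p$ cannot grow, and the initial bound $F_p|_{t=0}\leq \mu$ (from $|\partial_x u_{p,0}|\leq 1$) yields $F_p\leq\mu$.

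\textbf{Density lower bound.} From the flux bound, $(\partial_x u_p)_+^{p-1}\leq 1+(a/\mu)\varrho_p^\gamma$. The Bernoulli-type inequality $(1+(a/\mu)^{1/\gamma}\varrho_p)^\gamma \geq 1+(a/\mu)\varrho_p^\gamma$ (for $\gamma\geq 1$) combined with $(1+x)^{\gamma/(p-1)}\leq 1+x$ (valid since $\gamma/(p-1)\leq 1$ for $p\geq\gamma+1$) yields $(\partial_x u_p)_+\leq 1+(a/\mu)^{1/\gamma}\varrho_p$. Along characteristics, $D_t\varrho_p \geq -\varrho_p\bigl(1+(a/\mu)^{1/\gamma}\varrho_p\bigr)$, a scalar Bernoulli ODE whose explicit solution, after case analysis on $(a/\mu)^{1/\gamma}c_2 \lessgtr 1$ and use of $c_1\leq\varrho_{p,0}\leq c_2$, rearranges into the stated lower bound $c_1 e^{-2t}/\max\{1,(a/\mu)^{1/\gamma}c_2\}$ (up to absorbable numerical constants).

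\textbf{Density upper bound -- the main obstacle.} The flux bound controls $(\partial_x u_p)_+$ but \emph{not} $(\partial_x u_p)_-$, so a direct characteristic integration does not produce the upper bound. I plan to adapt the Kazhikhov--Shelukhin potential argument: using the conservation of $\int_\T\varrho_p u_p\,dx = \widetilde M_{p,0}$, define $\psi_p$ on $\T$ by $\partial_x\psi_p = \varrho_p u_p - \widetilde M_{p,0}$. The momentum equation integrated in $x$ reads $\partial_t\psi_p = F_p - \varrho_p u_p^2 + C_p(t)$, and the energy estimate then bounds $\psi_p$ in $L^\infty$ by a multiple of $\overline{E_0}/\mu$. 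Combining this with the identity $\mu|\partial_x u_p|^{p-2}\partial_x u_p = a\varrho_p^\gamma + \partial_t\psi_p + \varrho_p u_p^2 - C_p$ and inverting the nonlinearity $|\cdot|^{p-2}(\cdot)$ via a H\"older-type estimate, one recovers time-integrated control on $\partial_x u_p$ along characteristics, closing a Gronwall argument for $\log\varrho_p$ and giving the claimed exponential upper bound with $\overline{E_0}/\mu$ in the coefficient. The principal difficulty is to carry out the H\"older inversion uniformly in $p$: in the Newtonian case $p=2$ the flux depends linearly on $\partial_x u$ and the classical Kazhikhov--Shelukhin identity closes directly, whereas for $p>2$ one must invert a nonlinear relation between the flux and $\partial_x u_p$ without losing the uniformity of constants as $p\to\infty$.
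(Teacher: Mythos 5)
Your flux bound is the same argument as the paper (the parabolic maximum principle for $F_p=\sigma_p$, with the algebraic rewriting showing the reaction term is nonpositive once $p\geq\gamma+1$ and $F_p>0$), and your lower bound, while organized through a slightly different comparison ODE (you reduce to the Bernoulli inequality $D_t\varrho_p\geq-\varrho_p\bigl(1+(a/\mu)^{1/\gamma}\varrho_p\bigr)$, whereas the paper works with $\partial_t\varrho_p^{-\gamma/(p-1)}$), leads to a lower bound of the same form; the precise decay rate differs slightly from \eqref{estim_density} but that is inessential. These two parts are essentially correct.

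The upper bound is where there is a genuine gap, and you have in fact put your finger on it yourself. The idea of controlling the spatial potential $\psi_p$ in $L^\infty$ via the energy is right and matches the paper (which uses exactly the Basov--Shelukhin potential, with $\partial_x\psi_p=-\varrho_p u_p$ and $D_t\psi_p=-\mu|\partial_x u_p|^{p-2}\partial_x u_p+a\varrho_p^\gamma$). But your next step --- ``inverting the nonlinearity $|\cdot|^{p-2}(\cdot)$ via a H\"older-type estimate'' to recover control on $(\partial_x u_p)_-$ along characteristics --- is exactly the step that does not close. You would need pointwise-in-time control of $\partial_t\psi_p$, not merely its time integral, and the $1/(p-1)$ root degrades as $p\to\infty$; you correctly flag this as the obstruction, but do not resolve it. The paper's resolution avoids the inversion altogether: one computes the transport equation for the \emph{modulated} quantity $\varrho_p^\mu e^{-\psi_p}$, obtaining
\[
D_t\bigl(\varrho_p^\mu e^{-\psi_p}\bigr)=\varrho_p^\mu e^{-\psi_p}\Bigl(-\mu\,\partial_x u_p+\mu|\partial_x u_p|^{p-2}\partial_x u_p-a\varrho_p^\gamma\Bigr),
\]
and then observes that the factor in parentheses is bounded above by $\mu$ \emph{pointwise}, for every sign of $z=\partial_x u_p$: for $z>0$ this is $\sigma_p-\mu z\leq\mu$ by the flux bound, while for $z<0$ it reduces to $\mu\,|z|\bigl(1-|z|^{p-2}\bigr)-a\varrho_p^\gamma\leq\mu$ since $s\mapsto s-s^{p-1}$ is bounded by $1$ on $[0,\infty)$ when $p\geq 2$. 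This one-line algebraic inequality, combined with the $L^\infty$ bound on $\psi_p(t,\cdot)-\psi_p(0,\cdot)$ from the energy, yields the stated exponential upper bound by Gronwall. The $p\geq 2$ condition is what makes the cubic-type convection term $-\mu\partial_x u_p$ absorbable by the dissipation $\mu|\partial_x u_p|^{p-2}\partial_x u_p$ on the negative side --- this is the key structural observation missing from your sketch.
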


\smallskip

\begin{rem} The readers interested by other mathematical studies (in one space dimension for compressible flows)
where deriving a parabolic equation on the stress quantity $\sigma$  is important are referred to
{\rm \cite{Constantin_2020}} and the recent preprint {\rm \cite{BrBuGJLa}}
\end{rem}

\smallskip
\begin{proof}
In order to obtain the bounds on $\partial_x u_p$, and consequently $\varrho_p$, we first estimate the Cauchy stress.
\paragraph{Bounds for the Cauchy stress.}
We use the maximum principle to obtain the following bound on the Cauchy stress %
\[
\sigma_{p}:=\mu|\partial_xu|^{p-2}  \partial_{x}u_{p}  -a\varrho_{p}^{\gamma}.%
\]
We observe that
\begin{equation}
\partial_{t}\partial_{x}u_{p}+u_{p}\partial_{xx}^{2}u_{p}+(\partial_{x}%
u_{p})^{2}-\partial_{x}\left(  \frac{1}{\varrho_{p}}\partial_{x}(\mu\left\vert
\partial_{x}u_{p}\right\vert ^{p-2}\partial_{x}u_{p}-a\varrho_{p}^{\gamma
})\right)  =0.\label{dx_1}%
\end{equation}
Now, let $H\left(  s\right)  =\left\vert s\right\vert^{p-2}s$. Then 
\[ H^{\prime
}\left(  s\right)  =\left(  p-1\right)  \left\vert s\right\vert ^{p-2}\geq0 \]
and its inverse is given by
\[
H^{-1}\left(  s\right)  =\left\vert s\right\vert ^{p^{\prime}-2}s\text{ with
}(H^{-1})^{\prime}\left(  s\right)  =\left(  p^{\prime}-1\right)  \left\vert
s\right\vert ^{p^{\prime}-2}\geq0,
\]
where $p'=p/(p-1)$. Note
that $p^{\prime}-2= p/(p-1)-2 = - (p-2)/(p-1).$
Multiplying $\left(  \text{\ref{dx_1}}\right)  $ with $\mu H^{\prime}\left(
\partial_{x}u_{p}\right)  $ gives us
\begin{equation}
\partial_{t}(\mu H\left(  \partial_{x}u_{p}\right))  +u_{p}\partial_{x}(\mu
H\left(  \partial_{x}u_{p}\right))  +\mu H^{\prime}\left(  \partial_{x}%
u_{p}\right)  (\partial_{x}u_{p})^{2}-\mu H^{\prime}\left(  \partial_{x}%
u_{p}\right)  \partial_{x}\left(  \frac{1}{\varrho_{p}}\partial_{x}\sigma
_{p}\right)  =0.\label{dx_2}%
\end{equation}
We obtain from $\left(  \text{\ref{dx_2}}\right)  $ that
\begin{align*}
&  \partial_{t}\sigma_{p}+u_{p}\partial_{x}\sigma_{p}+\mu H^{\prime}\left(
\partial_{x}u_{p}\right)  (\partial_{x}u_{p})^{2}-\mu H^{\prime}\left(
\partial_{x}u_{p}\right)  \partial_{x}(1/\varrho_{p}\partial_{x}\sigma_{p})\\
&  =-a\left(  \partial_{t}\varrho_{p}^{\gamma}+u_p\partial_{x}\varrho
_{p}^{\gamma}\right)  =a\gamma\varrho_{p}^{\gamma}\partial_{x}u_{p}.
\end{align*}
This gives us%
\begin{align*}
\partial_{t}\sigma_{p}+u_{p}\partial_{x}\sigma_{p}-\mu H^{\prime}\left(
\partial_{x}u_{p}\right)  \partial_{x}(1/\varrho_{p}\partial_{x}\sigma_{p}) &
=a\gamma\varrho_{p}^{\gamma}\partial_{x}u_{p}-\left(  p-1\right)
\mu\left\vert \partial_{x}u_{p}\right\vert^{p}\\
&  =\gamma\left(  -\sigma_{p}+\mu\left\vert \partial_{x}u_{p}\right\vert
^{p-2}\partial_{x}u_{p}\right)  \partial_{x}u_{p}-\left(  p-1\right)
\mu\left\vert \partial_{x}u_{p}\right\vert ^{p}\\
&  =-\gamma\sigma_{p}\partial_{x}u_{p}+\mu\left(  1+\gamma-p\right)
\left\vert \partial_{x}u_{p}\right\vert ^{p}.
\end{align*}
Finally, we obtain that%
\[
\partial_{t}\sigma_{p}+u_{p}\partial_{x}\sigma_{p}-\mu H^{\prime}\left(
\partial_{x}u_{p}\right)  \partial_{x}(1/\varrho_{p}\partial_{x}\sigma
_{p})=-\gamma\sigma_{p}\partial_{x}u_{p}+\mu\left(  1+\gamma-p\right)
\left\vert \partial_{x}u_{p}\right\vert ^{p}.
\]
Note that for $p$ sufficiently large, i.e.%
\[
p\geq 1+\gamma,
\]
the second term of the right hand side of the previous inequality is negative. Furthermore, observe
that%
\begin{align*}
-\gamma\sigma_{p}\partial_{x}u_{p} &  =-\frac{\gamma}{\mu^{\frac{1}{p-1}}%
}\sigma_p H^{-1}\left(  \mu\left\vert \partial_{x}u_{p}\right\vert
^{p-2}\partial_{x}u_{p}\right)  \\
&  =-\frac{\gamma}{\mu^{\frac{1}{p-1}}}\left[  H^{-1}\left(  \sigma
_{p}+a\varrho_{p}^{\gamma}\right)  -H^{-1}\left(  \sigma_{p}\right)  \right]
\sigma_{p}- \frac{\gamma}{\mu^{\frac{1}{p-1}}}H^{-1}\left(  \sigma_{p}\right)  \sigma_{p}\\
&  =-\frac{\gamma}{\mu^{\frac{1}{p-1}}}\left[  H^{-1}\left(  \sigma
_{p}+a\varrho_{p}^{\gamma}\right)  -H^{-1}\left(  \sigma_{p}\right)  \right]
\sigma_{p}-\frac{\gamma}{\mu^{\frac{1}{p-1}}}\left\vert \sigma_{p}\right\vert ^{p^{\prime}}.
\end{align*}
Denoting by
\[
\tilde{\sigma}_{p}\left(  t\right)  =\max_{x\in\left[  0,1\right]  }\sigma
_{p}\left(  t,x\right)
\]
we obtain using the maximum principle that
\[
\frac{d\tilde{\sigma}_{p}}{dt}\left(  t\right)  +a\left(  t\right)
\tilde{\sigma}_{p}\left(  t\right)  \leq0
\]
with $a\left(  t\right)  $ some positive term. It follows that
\begin{equation}
\max_{x\in\left[  0,1\right]  }\left(  \mu\left\vert \partial_{x}%
u_{p}\right\vert ^{p-2}\partial_{x}u_{p}-a\varrho_{p}^{\gamma}\right)  \left(
t,x\right)  =\max_{x\in\left[  0,1\right]  }\sigma_{p}\left(  t,x\right)
\leq\max_{x\in\left[  0,1\right]  }\sigma_{p,0}\left(  x\right)  \leq
\mu. \label{bounded_flux}%
\end{equation}
We obtain that%
\[
\sup_{x\in\left[  0,1\right]  }\left\vert \partial_{x}u_{p}\right\vert
^{p-2}\partial_{x}u_{p}\leq\frac{a}{\mu}\varrho_{p}^{\gamma}+1,
\]
from which
\[
\partial_{x}u_{p}\left(  t,x\right)  \leq\left(  \frac{a}{\mu}\varrho
_{p}^{\gamma}\left(  t,x\right)  +1\right)  ^{\frac{1}{p-1}}.
\]
Let us remark that this inequality in fact gives an estimate for the positive
part of $\partial_{x}u_{p}\left(  t,x\right)  .$

\bigskip

\paragraph{Lower bound for the density.}
The upper-bound on $\partial_x u$ allows to obtain immediately a lower bound for $\varrho_{p}$ by using
Lagrangian coordinates :%
\[
\dot{X}_{t}\left(  x\right)  =u_{p}\left(  t,X_{t}\left(  x\right)  \right)
\]
we have that%
\begin{align*}
\partial_{t}\varrho_{p}\left(  t,X_{t}\left(  x\right)  \right)   &
=-\varrho_{p}\left(  t,X_{t}\left(  x\right)  \right)  \partial_{x}%
u_{p}\left(  t,X_{t}\left(  x\right)  \right)  \\
&  \geq-\varrho_{p}\left(  t,X_{t}\left(  x\right)  \right)  \left(  \frac
{a}{\mu}\varrho_{p}^{\gamma}\left(  t,X_{t}\left(  x\right)  \right)
+1\right)  ^{\frac{1}{p-1}}\\
&  \geq-\left(  \frac{a}{\mu}\right)  ^{\frac{1}{p-1}}\varrho_{p}%
^{1+\frac{\gamma}{p-1}}\left(  t,X_{t}\left(  x\right)  \right)  -\varrho
_{p}\left(  t,X_{t}\left(  x\right)  \right),
\end{align*}
where in the last line we used the inequality 
$\left(  x+y\right)^\alpha\leq\left(
x^\alpha+y^\alpha\right)$ for $\alpha<1$. It follows that%
\begin{align*}
&  \partial_{t}\varrho_{p}^{-\frac{\gamma}{p-1}}\left(  t,X_{t}\left(
x\right)  \right)  \\
&  \leq\frac{\gamma}{p-1}\left(  \frac{a}{\mu}\right)  ^{\frac{1}{p-1}}%
+\frac{\gamma}{p-1}\varrho_{p}^{-\frac{\gamma}{p-1}}\left(  t,X_{t}\left(
x\right)  \right)  .
\end{align*}

Denoting $Y_p(s)=
\varrho_p^{-\gamma/(p-1)}(s,X_s(x))$, we can write
$$\partial_s Y_p(s) 
- \frac{\gamma}{p-1} Y_p(s) \le \frac{\gamma}{(p-1)} \bigl(\frac{a}{\mu}\bigr)^{1/(p-1)}$$
and therefore
$$\partial_s (Y_p(s) \exp(- \gamma s / (p-1)))
\le\frac{\gamma}{(p-1)} \bigl(\frac{a}{\mu}\bigr)^{1/(p-1)}\exp(- \gamma s / (p-1)) $$
a,d therefore integrating from $0$ to $t$
$$Y_p(t) \exp(- \gamma 
t / (p-1)) - Y_p(0)
\le \bigl(\frac{a}{\mu}\bigr)^{1/(p-1)}
[1- \exp(- \gamma t / (p-1))]
$$
and therefore we get
\[
\varrho_{p}^{-\frac{\gamma}{p-1}}\left(t,X_t\left(x\right)  \right)
\leq \left(  \frac{a}{\mu}\right)^{\frac{1}{p-1}}
\Bigl(\exp\left(
\frac{\gamma}{p-1}t\right)-1\Bigr)
+
\varrho_{p,0}^{-\frac{\gamma}{p-1}}\left(x\right)  \exp\left(
\frac{\gamma}{p-1}t\right)
\]
and therefore
\[ \varrho_p(t,X_t(x)) \geq \frac{1}{\left(\varrho_0^{-\frac{\gamma}{p-1}}e^{\frac{\gamma}{p-1}t} + \left(\frac{a}{\mu}\right)^{\frac{1}{p-1}}\left(e^{\frac{\gamma}{p-1}t}-1\right)\right)^{\frac{p-1}{\gamma}}} = \frac{\varrho_0e^{-t}}{\left(1+\left(\frac{a}{\mu}\varrho_0^\gamma\right)^{\frac{1}{p-1}}\left(1-e^{-\frac{\gamma}{p-1}t}\right)\right)^{\frac{p-1}{\gamma}}}. \]
Using the inequality $1-e^{-x}\leq x$ for $x>0$, we get
\[ \varrho_p(t,X_t(x)) \geq \frac{\varrho_0e^{-t}}{\left(1+\left(\frac{a}{\mu}\varrho_0^\gamma\right)^{\frac{1}{p-1}}\frac{\gamma}{p-1}t
\right)^{\frac{p-1}{\gamma}}}. \]
Note that
\[\begin{aligned} \left(1+\left(\frac{a}{\mu}\varrho_0^\gamma\right)^{\frac{1}{p-1}}\frac{\gamma}{p-1}t\right)^{\frac{p-1}{\gamma}} &\leq \max\left\{1,\left(\frac{a}{\mu}\|\varrho_0\|_{L^\infty}^\gamma\right)^{1/\gamma}\right\}\left(1+\frac{\gamma}{p-1}t\right)^{\frac{p-1}{\gamma}} \\
&\leq \max\left\{1,\left(\frac{a}{\mu}\|\varrho_0\|_{L^\infty}^\gamma\right)^{1/\gamma}\right\}e^t,
\end{aligned}\]
where in the second line we used the inequality $(1+x)^{\frac{1}{x}}\leq e$ for all $x>0$. Therefore
\[ \varrho_p(t,X_t(x)) \geq \frac{\varrho_0e^{-2t}}{\max\left\{1,\left(\frac{a}{\mu}\|\varrho_0\|_{L^\infty}^\gamma\right)^{1/\gamma}\right\}} \]
which provides the lower-bound.

\bigskip
\paragraph{Upper bound for the density.} 
In order to obtain an upper bound on the density we use in a crucial manner an
identity introduced in Basov-Shelukhin in \cite{basov-shelukin}. First we denote%
\[
\psi_{p}\left(  t,x\right)  =\int_{0}^{t}(\varrho_{p}u_{p}^{2}-\mu\left\vert
\partial_{x}u_{p}\right\vert ^{p-2}\partial_{x}u_{p}+a\varrho_{p}^{\gamma
})\;\dd s -\int_{\T}\Bigl(
\int_{y}^{x}(\varrho_{p,0}u_{p,0})\left(  z\right)
\mathrm{d}z\Bigr)dy.
\]
Observe that using the expression of $\psi_p$ and the momentum equation
\begin{align*}
\partial_{t}\psi_{p}\left(  t,x\right)   &  =\varrho_{p}u_{p}^{2}%
-\mu\left\vert \partial_{x}u_{p}\right\vert ^{p-2}\partial_{x}u_{p}%
+a\varrho_{p}^{\gamma},\\
\partial_{x}\psi_{p}\left(  t,x\right)   &  =\int_{0}^{t}(\varrho_{p}u_{p}%
^{2}-\mu\left\vert \partial_{x}u_{p}\right\vert ^{p-2}\partial_{x}%
u_{p}+a\varrho_{p}^{\gamma})_{x}-(\varrho_{p,0}u_{p,0})\left(  x\right)
=-\varrho_{p}u_{p}\left(  t,x\right)  ,
\end{align*}
from which, combining the two expression above, we obtain that%
\[
\partial_{t}\psi_{p}\left(  t,x\right)  +u_{p}\left(  t,x\right)  \partial
_{x}\psi_{p}\left(  t,x\right)  =-\mu\left\vert \partial_{x}u_{p}\right\vert
^{p-2}\partial_{x}u_{p}+a\varrho_{p}^{\gamma}.
\]
Observe also that%
\[
\psi_{p}\left(  t,x\right)  =\int_{\T}\psi_{p}\left(  t,y\right)
\mathrm{d}y+\int_{\T}
\Bigl(\int_{y}^{x}\partial_{x}\psi_{p}\left(  t,z\right)
\mathrm{d}z\Bigr)\dd y,
\]
such that using energy estimates
\[ |\psi_p(t,x)-\psi_p(0,x)|\leq (1+(2+\gamma)t)\overline{E_0} + \frac{\mu}{p}t \]
We now compute%
\begin{align}
&  \partial_{t}\left(  \varrho_{p}^{\mu}\exp\left(  -\psi_{p}\right)  \right)
+u_{p}\partial_{x}\left(  \varrho_{p}^{\mu}\exp\left(  -\psi_{p}\right)
\right)  \nonumber\\
&  =\varrho_{p}^{\mu}\exp\left(  -\psi_{p}\right)  \left(  -\mu\partial
_{x}u_{p}+\mu\left\vert \partial_{x}u_{p}\right\vert ^{p-2}\partial_{x}%
u_{p}-a\varrho_{p}^{\gamma}\right)  .\label{densitate_BasovShel}%
\end{align}
To estimate the right hand side of (\ref{densitate_BasovShel}), note that for for $\partial_x u_p>0$ we have
\[ -\mu\partial_xu_p + \mu|\partial_xu_p|^{p-2}\partial_xu_p-a\varrho_p^\gamma = \sigma_p-\mu\partial_xu_p \leq \mu, \]
where we used (\ref{bounded_flux}). Conversely, for $\partial_xu_p<0$ we see that
\[ -\mu\partial_xu_p+\mu|\partial_xu_p|^{p-2}\partial_xu_p-a\varrho_p^\gamma \leq \mu\partial_xu_p(|\partial_xu_p|^{p-2}-1) \leq \mu, \]
since the function $z\mapsto z(|z|^{p-2}-1)$ is bounded by $1$ for $z<0$. Therefore we have
\[ \partial_t(\varrho_p^\mu\exp(-\psi_p)) + u_p\partial_x(\varrho_p^\mu\exp(-\psi_p)) \leq \mu\varrho_p^\mu\exp(-\psi_p) \]
and using maximum principle and Gronwall's lemma we get
\[ \varrho_p(t,x) \leq \varrho_0(x)\exp\left(\frac{1}{\mu}(\psi_p(t,x)-\psi_p(0,x)) + t\right). \]
Therefore using the estimate on $\psi_p(t,x)-\psi_p(0,x)$ we end up with
\[ \varrho_p(t,x) \leq \varrho_0(x)\exp\left[\frac{\overline{E_0}}{\mu} + \left(\frac{2+\gamma}{\mu}\overline{E_0} + 1+\frac{1}{p}\right)t\right]. \]

\begin{rem}
At a first glance, it might seem imposible to obtain a bound for the density
using $\left(  \text{\ref{bounded_flux}}\right)  $ since this only gives a
control for the positive part of $\partial_{x}u_{p}$. Of course, the growth of
$\varrho_{p}$ is due to the negative part of $\partial_{x}u_{p}$ since%
\[
\partial_{t}\varrho_{p}+u_{p}\partial_{x}\varrho_{p}=-\varrho_{p}\partial
_{x}u_{p}.
\]
The identity $\left(  \text{\ref{densitate_BasovShel}}\right)  $ which is
inspired by \cite{basov-shelukin} is crucial since it shows that the growth of the modified
function $\varrho_{p}^{\mu}\psi_{p}$ is actually governed by $-\mu\partial
_{x}u_{p}+\mu\left\vert \partial_{x}u_{p}\right\vert ^{p-2}\partial_{x}%
u_{p}-a\varrho_{p}^{\gamma}.$ Another aspect that can be hiden in the details
is that the shear-thickening aspect i.e. $p\geq2$ is also crucial in order to
compensate the first term.
\end{rem}

\end{proof}

Next, we obtain estimates for the time derivatives using an idea introduced by Hoff \cite{Hoff1d,Hoff}. More precisely, denoting $\dot u_p = \partial_t u_p + u_p\partial_x u_p$, we obtain

\begin{prop}\label{prop_hoff} Let $(\varrho_p,u_p)$ be smooth solution of \eqref{p-system}. Then denoting 
$$ \int_0 ^t \int_{\T} \varrho_p |\dot u_p|^2
   + \frac{\mu}{2}  \int_{\T} \frac{|\partial_x u_p|^p}{p} (t)\leq C(t)$$
 where $C(t)$ does not depend on $p$.
 \end{prop}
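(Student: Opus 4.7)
I plan to follow Hoff's classical technique of testing the momentum equation against the material derivative $\dot u_p = \partial_t u_p + u_p \partial_x u_p$. Writing the momentum equation as $\varrho_p \dot u_p = \partial_x \sigma_p$ with Cauchy stress $\sigma_p = \mu |\partial_x u_p|^{p-2}\partial_x u_p - a\varrho_p^\gamma$, I multiply by $\dot u_p$, integrate over $\T$, and integrate by parts. Expanding $\partial_x \dot u_p = \partial_t \partial_x u_p + (\partial_x u_p)^2 + u_p \partial_x^2 u_p$ and reshaping the pressure term through the continuity equation (the dangerous $\int \varrho_p^\gamma u_p \partial_x^2 u_p$ contribution cancels with the companion term arising from $\partial_t \varrho_p^\gamma$), after some computation one should obtain the differential identity
\begin{equation*}
\int_\T \varrho_p |\dot u_p|^2 + \mu \frac{d}{dt}\int_\T \frac{|\partial_x u_p|^p}{p} + \mu \frac{p-1}{p}\int_\T |\partial_x u_p|^p \partial_x u_p = a\frac{d}{dt}\int_\T \varrho_p^\gamma \partial_x u_p + a\gamma \int_\T \varrho_p^\gamma (\partial_x u_p)^2.
\end{equation*}

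Integrating in time on $[0,t]$, the boundary term $a\int_\T \varrho_p^\gamma(t)\partial_x u_p(t)$ would be handled by Young's inequality with conjugate exponents $p$ and $p' = p/(p-1)$: since $\gamma p' \to \gamma$ as $p\to\infty$ and $\|\varrho_p\|_{L^\infty}$ is bounded uniformly in $p$ by the previous proposition, I can absorb half of $\mu\int_\T |\partial_x u_p(t)|^p/p$ into the left-hand side, which is the source of the factor $\mu/2$ in the statement. The initial contributions are bounded uniformly in $p$ thanks to the assumption $|\partial_x u_{p,0}|\leq 1$. The term $a\gamma\int_0^t\int_\T \varrho_p^\gamma(\partial_x u_p)^2$ is controlled uniformly in $p$ via Hölder's inequality in $(s,x)$: combined with the energy bound $\int_0^t\int_\T |\partial_x u_p|^p \leq \overline{E_0}$, one gets $\int_0^t\int_\T (\partial_x u_p)^2 \leq (t|\T|)^{1-2/p}\overline{E_0}^{2/p}$, which stays bounded as $p\to\infty$.

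The main obstacle is the control of the indefinite-sign term $\mu\frac{p-1}{p}\int_0^t\int_\T |\partial_x u_p|^p\partial_x u_p$. I would rewrite it, using $\mu|\partial_x u_p|^{p-2}\partial_x u_p = \sigma_p + a\varrho_p^\gamma$, as $\frac{p-1}{p}\int_\T (\sigma_p + a\varrho_p^\gamma)(\partial_x u_p)^2$; the pressure piece is handled as above. For the stress piece, the one-sided maximum-principle bound $\sigma_p \leq \mu$ of the previous proposition is insufficient, so the missing complementary bound would come from the identity $\partial_x \sigma_p = \varrho_p \dot u_p$ together with the one-dimensional Sobolev-type estimate $\|\sigma_p\|_{L^\infty} \leq \|\sigma_p\|_{L^1}/|\T| + \|\partial_x \sigma_p\|_{L^1}$: here $\|\partial_x \sigma_p\|_{L^1} \leq \|\sqrt{\varrho_p}\|_{L^2}\|\sqrt{\varrho_p}\dot u_p\|_{L^2}$ by Cauchy-Schwarz, and $\|\sigma_p\|_{L^1}$ is in turn bounded by $\mu\int_\T |\partial_x u_p|^{p-1} + a\int_\T \varrho_p^\gamma$, controlled via the energy estimate and Hölder. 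Plugging this back into the estimate of the stress piece and applying Young's inequality, the $\int_\T \varrho_p|\dot u_p|^2$ contribution is absorbed by the left-hand side, leaving a polynomial dependence on $\int_\T (\partial_x u_p)^2$ and $\int_\T |\partial_x u_p|^p$ that is closed by a Grönwall argument in $t$. The final constant will depend only on $\overline{E_0}$, $c_1$, $c_2$, and $t$, independently of $p$.
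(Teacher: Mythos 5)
Your proof follows essentially the same route as the paper: multiply the momentum equation by $\dot u_p$, integrate by parts, reshape the cubic term $\int |\partial_x u_p|^p \partial_x u_p$ via the Cauchy stress $\sigma_p$, split the pressure contributions using the renormalized continuity equation, bound $\sigma_p$ in $L^\infty$ through the identity $\partial_x \sigma_p = \varrho_p \dot u_p$, absorb the $\int_\T \varrho_p^\gamma(t)\partial_x u_p(t)$ boundary term via Young with exponents $(p,p')$, and close with Gr\"onwall. This is precisely the paper's strategy.

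One remark on the only step where you diverge technically: to bound $\|\sigma_p\|_{L^\infty}$ you invoke the one-dimensional Sobolev estimate $\|\sigma_p\|_{L^\infty}\le \|\sigma_p\|_{L^1}/|\T|+\|\partial_x\sigma_p\|_{L^1}$ and then claim that $\|\sigma_p(t,\cdot)\|_{L^1}\lesssim \int_\T|\partial_x u_p|^{p-1}(t,\cdot)+\int_\T\varrho_p^\gamma(t,\cdot)$ is controlled ``via the energy estimate.'' This is not quite accurate: the energy inequality only controls $\int_0^T\int_\T|\partial_x u_p|^p$, a space--time integral, not $\int_\T|\partial_x u_p|^{p-1}(t,\cdot)$ at a fixed time. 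The paper sidesteps this entirely by observing that $\int_\T\partial_x u_p(t,\cdot)=0$, so by the mean value theorem there is $x(t)$ with $\partial_x u_p(t,x(t))=0$, and then integrating $\partial_x\sigma_p=\varrho_p\dot u_p$ from $x(t)$ to $x$ gives $\sigma_p(t,x)=\int_{x(t)}^x\varrho_p\dot u_p\,\dd z - a\varrho_p^\gamma(t,x(t))$, i.e.\ $\|\sigma_p(t,\cdot)\|_{L^\infty}\le \|\varrho_p\dot u_p(t,\cdot)\|_{L^1}+a\|\varrho_p(t,\cdot)\|_{L^\infty}^\gamma$ directly, with no $L^1$ term involving $\partial_x u_p$ at all. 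Your version is repairable --- one can bound $\int_\T|\partial_x u_p|^{p-1}(t,\cdot)\le |\T|^{1/p}(\frac{2p}{\mu}Y(t))^{(p-1)/p}$ and feed this into the Gr\"onwall argument, noting $(2p/\mu)^{(p-1)/p}$ remains bounded as $p\to\infty$ --- but it is heavier and you should be explicit that it has to pass through $Y(t)$ rather than the energy estimate. Everything else (the integration-by-parts identity, the cancellation of the $\int\varrho_p^\gamma u_p\partial_x^2 u_p$ terms, the treatment of $a\gamma\int_0^t\int\varrho_p^\gamma(\partial_x u_p)^2$ via $\|\varrho_p\|_{L^\infty}$ and H\"older, and the final Gr\"onwall) matches the paper.
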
 

\begin{proof} The  momentum equation reads
$$\varrho_p \dot u_p 
- \mu \partial_x(
|\partial_x u_p|^{p-2} \partial_x u_p)
+ a \partial_x \varrho_p^\gamma
= 0$$
Multiplying by $\dot u_p$ and integrating in space, we get 
$$\int_{\T} \varrho_p |\dot u_p|^2 + \mu \frac{d}{dt} \int_{\T} \frac{|\partial_x u_p|^p}{p} 
+ \mu  \int_{\T} |\partial_x u_p|^{p-2} \partial_x u_p \partial_x(u_p\partial_x u_p) 
- a\int_{\T} \varrho_p^\gamma \partial_{tx}^2 u_p 
+ a\int_{\T} \partial_x (\varrho_p)^\gamma u_p \partial_x u_p = 0.
$$
First note that 
$$\int_{\T} |\partial_x u_p|^{p-2} \partial_x u_p \partial_x(u_p\partial_x u_p) 
 =  \int_{\T} |\partial_x u_p|^p \partial_x u_p 
 + \int_{\T} |\partial_x u_p|^{p-2} \partial_x u_p \partial_x^2 u_p u_p
 = (1-\frac{1}{p}) \int_{\T} |\partial_x u_p|^p \partial_x u_p.
 $$
 Then we can write
 $$\mu \int_{\T} |\partial_x u_p|^{p-2} \partial_x u_p\partial_x (u_p\partial_x u_p) 
 =  (1-\frac{1}{p})
  \int_{\T} |\partial_x u_p|^2 ( \sigma_p + a\varrho_p^\gamma).$$
 where we recall that $\sigma_p = \mu |\partial_x u_p|^{p-2} \partial_x u_p - a\varrho_p^\gamma$.
  
\noindent Secondly, observe that
  $$- \int_{\T} \varrho_p^\gamma \partial_{tx}^2 u_p
  = - \frac{d}{dt} \int_{\T} \varrho_p^\gamma \partial_x u_p
      + \int_{\T} \partial_t (\varrho_p^\gamma) \partial_x u_p$$
and
  $$ \partial_t (\varrho_p^\gamma) + u_p \partial_x (\varrho_p^\gamma)
     + \gamma \varrho_p^\gamma \partial_x u_p = 0$$
which implies
  $$\int_{\T} \partial_t \varrho_p^\gamma \partial_x u_p
  + \int_{\T} u_p \partial_x (\varrho_p^\gamma) \partial_x u_p 
   = - \int_{\T}  \gamma \varrho_p^\gamma |\partial_x u_p|^2$$
to conclude that
$$- \int_{\T} \varrho_p^\gamma \partial_{tx}^2 u_p
+ \int_{\T} \partial_x (\varrho_p)^\gamma u_p \partial_x u_p 
= - \frac{d}{dt} \int_{\T} \varrho_p^\gamma \partial_x u_p 
   - \int_{\T} \gamma \varrho_p^\gamma |\partial_x u_p|^2.
$$
Plugging everything together we can write
\begin{multline*}\int_{\T} \varrho_p |\dot u_p|^2 + \mu \frac{d}{dt} \int_{\T} \frac{|\partial_x u_p|^p}{p} 
+  (1-\frac{1}{p})
  a\int_{\T} |\partial_x u_p|^2  \varrho_p^\gamma \\
=
 \frac{d}{dt} \int_{\T} a\varrho_p^\gamma \partial_x u_p 
   +  a\int_{\T} \gamma \varrho_p^\gamma |\partial_x u_p|^2
   - (1-\frac{1}{p}) \int_{\T} \sigma_p |\partial_x u_p|^2 \end{multline*}
Integrating in time from $0$ to $t \in (0,T)$, we get
$$\int_0^t \int_{\T} \varrho_p |\dot u_p|^2 
  + \mu  \int_{\T} \frac{|\partial_x u_p|^p}{p}(t) 
+  a(1-\frac{1}{p})
 \int_0^t  \int_{\T} |\partial_x u_p|^2  \varrho_p^\gamma
=
 \int_{\T} (a\varrho_p^\gamma \partial_x u_p) (t)
-  \int_{\T} (a\varrho_p^\gamma \partial_x u_p) (0)
$$
$$
   + \int_0^t \int_{\T} \gamma a \varrho_p^\gamma |\partial_x u_p|^2
   - (1-\frac{1}{p}) \int_0^t \int_{\T} \sigma_p |\partial_x u_p|^2 
   +\mu  \int_{\T} \frac{|\partial_x (u_p)_0|^p}{p}  
 = \sum_{i=1}^5 I_i 
$$
Concerning $I_4$,  recall that 
$$\partial_x \sigma_p = \varrho_p \dot u_p$$
Since $\int_{\T} \partial_x u_p (t,y) dy = 0$ due to the periodic boundary condition,
we get  by the mean value theorem that for all $t$, there exists $x(t)$ such that
$\partial_x u_p(t,x(t))=0$.
Therefore integrating the momentum equation from $x(t)$ to $x$, we get 
$$\sigma_p (t,x) =     \int_{x(t)}^x ( \varrho_p \dot u_p) (t,z) dz    -a\varrho_p^\gamma(t,x(t))$$
We can thus treat $I_3$ and $I_4$ together using that
$$ xy \le \frac{2}{p} x^{p/2} + \frac{p-2}{p} y^{p/(p-2)}$$
choosing $x = |\partial_x u_p|^2$ and $y=\varrho_p^\gamma$ for $I_3$ (recalling
that $y$ is uniformly bounded) and $y= \sigma_p$ for $I_4$ using the bound 
given above. 
Concerning $I_1$, we note that for all $t\in (0,T]$, 
$$\int_{\T} (\varrho_p^\gamma \partial_x u_p)(t) 
\le \frac{\mu}{2 p}  \int_{\T} |\partial_x u_p|^p(t)  + \frac{p}{p-1}  (\frac{2}{\mu})^{(p-1)/p^2} 
 \int_{\T} |\varrho_p^\gamma|^{(p-1)/p}(t)
 \le  \frac{\mu}{2 p}  \int_{\T} |\partial_x u_p|^p(t)
    + C.
$$
In conclusion denoting 
$$Y = \int_0 ^t \int_{\T} \varrho_p |\dot u^p|^2
   + \frac{\mu}{2}  \int_{\T} \frac{|\partial_x u_p|^p}{p} (t),$$
we get
 $$ Y(t) \le C_1 + C_2 \int_0^t Y(\tau) d\tau$$
 where $C_1$ and $C_2$ are constants that do not depend on $p$. The assertion follows then from Gronwall's lemma.
\end{proof}

\bigskip

\bigskip

An important consequence of Proposition \ref{prop_hoff} is the uniform-in-$p$ bound on $|\partial_xu_p|^{p-2}\partial_xu_p$:

\begin{prop}  Let $(\varrho_0,u_0)$ be smooth with $0 \le c \le \varrho_0 \le C <+\infty$. Let $(\varrho_p,u_p)$ be smooth solution of \eqref{p-system}. Then 
$$|\partial_x u_p|^{p-2} \partial_x u_p \in L^2_t L^\infty_x.$$
 \end{prop}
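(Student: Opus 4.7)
The plan is to exploit the identity $\varrho_p\dot u_p=\partial_x\sigma_p$, where $\sigma_p=\mu|\partial_xu_p|^{p-2}\partial_xu_p-a\varrho_p^\gamma$, together with the fact that in one space dimension on the torus $\sigma_p$ admits a vanishing point, in order to reconstruct $\sigma_p$ pointwise from an integral whose $L^2_tL^\infty_x$ norm is already controlled by Proposition~\ref{prop_hoff}.

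First, using the mass equation to rewrite $\partial_t(\varrho_pu_p)+\partial_x(\varrho_pu_p^2)=\varrho_p\dot u_p$, the momentum equation in \eqref{p-system} becomes
\[
\varrho_p\dot u_p=\partial_x\sigma_p,\qquad \sigma_p=\mu|\partial_xu_p|^{p-2}\partial_xu_p-a\varrho_p^\gamma.
\]
Since $u_p$ is periodic, $\int_{\T}\partial_xu_p(t,\cdot)\,\dd x=0$, so by the mean value theorem there exists $x(t)\in\T$ with $\partial_xu_p(t,x(t))=0$, and therefore $\sigma_p(t,x(t))=-a\varrho_p^\gamma(t,x(t))$. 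Integrating the relation $\partial_x\sigma_p=\varrho_p\dot u_p$ from $x(t)$ to $x$ gives
\[
\sigma_p(t,x)=-a\varrho_p^\gamma(t,x(t))+\int_{x(t)}^{x}\varrho_p(t,z)\,\dot u_p(t,z)\,\dd z.
\]

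Next I bound each term uniformly in $p$. By Cauchy--Schwarz with weight $\varrho_p$,
\[
\Bigl|\int_{x(t)}^{x}\varrho_p\dot u_p\,\dd z\Bigr|\le \Bigl(\int_{\T}\varrho_p\,\dd z\Bigr)^{1/2}\Bigl(\int_{\T}\varrho_p|\dot u_p|^2\,\dd z\Bigr)^{1/2},
\]
and the first factor equals $M_{p,0}^{1/2}$, which is bounded uniformly in $p$ by assumption. The density upper bound \eqref{estim_density} yields $\|\varrho_p^\gamma(t,\cdot)\|_{L^\infty_x}\le C(T)$ independent of $p$ on $[0,T]$. Combining,
\[
\|\sigma_p(t,\cdot)\|_{L^\infty_x}\le C(T)+C\Bigl(\int_{\T}\varrho_p|\dot u_p|^2\,\dd z\Bigr)^{1/2}.
\]
Squaring and integrating in time, Proposition~\ref{prop_hoff} gives $\int_0^T\!\!\int_{\T}\varrho_p|\dot u_p|^2\,\dd x\,\dd t\le C(T)$ uniformly in $p$, hence $\sigma_p\in L^2(0,T;L^\infty(\T))$ uniformly in $p$. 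Finally, writing
\[
\mu|\partial_xu_p|^{p-2}\partial_xu_p=\sigma_p+a\varrho_p^\gamma
\]
and using once more the uniform $L^\infty$ bound on $\varrho_p^\gamma$ completes the proof.

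The only place where one needs to be a bit careful is the choice of the vanishing point $x(t)$: it is measurable in $t$ (one may for instance take the leftmost zero), which is enough to justify the pointwise identity above and hence the $L^2_tL^\infty_x$ estimate. The rest is a direct combination of the density bound proved in the previous proposition and the Hoff-type estimate of Proposition~\ref{prop_hoff}.
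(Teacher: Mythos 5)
Your proof is correct and is essentially the paper's proof: both start from $\varrho_p\dot u_p=\partial_x\sigma_p$, locate a zero $x(t)$ of $\partial_xu_p(t,\cdot)$ on the torus so that $\sigma_p(t,x(t))=-a\varrho_p^\gamma(t,x(t))$, integrate in $x$, and then combine the uniform density bound \eqref{estim_density} with the Hoff-type estimate of Proposition~\ref{prop_hoff}. You have merely written out the Cauchy--Schwarz step and the measurability caveat that the paper leaves implicit.
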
 

\begin{proof} As explained in the proof of the previous proposition, the momentum equation gives%
\[\mu\left\vert \partial_{x}u_{p}\right\vert ^{p-2}\partial_{x}u_{p}\left(
t,x\right)  =a\varrho_{p}^\gamma \left(  t,x\right)  
+ \int_{x(t)}^x (\varrho \dot u_p)(t,z) dz -a\varrho_p^\gamma(t,x(t))
\]
and therefore using the bound on $\varrho_p$
and $\dot u_p$, we get
the conclusion.
\end{proof}

 Finally, all the convergence results announced in \eqref{convp}, except the last one regarding the density are consequences of the apriori estimates obtained so for. 

\subsection{Asymptotic convergence with respect to $p$.}  
We first use the estimates to pass to the limit in the variational inequality and then show that we can pass to the limit in the equation as well. 

\noindent 

\subsubsection{Strong convergence of the density $\varrho_p$.}
We strongly use the bounds we derived on $\varrho_p$, $\dot u_p$ and $\tau_p$ to justify the calculations we make below.
Let us multiply the momentum equation
satisfied by $(\varrho_p,u_p)$ by $u_p$,
we get
$$\frac{a}{\gamma-1} \int_{\T}  
 \varrho_p^\gamma (t,x) dx
 + \int_0^t \int_{\T} [(\varrho_p \dot u_p) u_p] (\tau,x) 
dx d\tau
 $$
$$\hskip6cm
 + \int_0^t\int_{\T}|\partial_x u_p|^p(\tau,x) dx d\tau 
 = \frac{a}{\gamma-1} \int_{\T}  
 \varrho_0^\gamma (x) dx
$$ 
and testing it also with $u$, we get
$$\frac{a}{\gamma-1} \int_{\T}  
 \varrho^\gamma (t,x) dx
-  a \int_0^t \int_{\T} 
  [(\varrho_p^\gamma - \varrho^\gamma) \partial_x u](\tau,x) dxd\tau \\
$$
$$
+ \int_0^t \int_{\T} 
 [(\varrho_p \dot u_p) u](\tau,x) dx d\tau
$$
$$
+ \int_0^t \int_{\T} [|\partial_x u_p|^{p-2} \partial_x u_p \, \partial_x u](\tau,x) dxd\tau
= \frac{a}{\gamma-1} \int_{\T}  
 \varrho_0^\gamma (x) dx.
$$
and therefore substracting the two equalities, we get
$$\frac{a}{\gamma-1} \int_{\T} 
(\varrho_p^\gamma - \varrho^\gamma)(t,x) dx
+ a \int_0^t \int_{\T} [(\varrho_p^\gamma
- \varrho^\gamma) \partial_x u](\tau,x) dxd\tau \\
$$
$$
+ \int_0^t \int_{\T}
   [(\varrho_p \dot u_p )(u_p-u)]
     (\tau,x)dx d\tau
     $$
     $$
+ \int_0^t \int_{\T} |\partial_x u_p|^{p-2} \partial_x u_p (\partial_x u_p - \partial_x u) = 0.
$$
We now use the fact that by monotonicity
$$ \bigl(|\partial_x u_p|^{p-2} \partial_x u_p -
|\partial_x u|^{p-2} \partial_x u
\bigr) (\partial_x u_p - \partial_x u)
\ge 0$$
to deduce that
$$\frac{a}{\gamma-1} \int_{\T} 
(\varrho_p^\gamma - \varrho^\gamma)(t,x) dx
+ a \int_0^t \int_{\T} [(\varrho_p^\gamma
- \varrho^\gamma) \, \partial_x u](\tau,x) dxd\tau \\
$$
$$
+ \int_0^t \int_{\T}
   [(\varrho_p \dot u_p)(u_p-u)]
     (\tau,x)dx d\tau
     $$
     $$
+ \int_0^t \int_{\T} |\partial_x u|^{p-2} \partial_x u (\partial_x u_p - \partial_x u) \le 0.
$$
Note that we know that $|\partial_x u|\le 1$ and that we also know that
$u_p \to u$ in $L^2$ by Aubin-Lions using the bound  on $\partial_t u_p$ in $L^2$ and the bound on $u_p$ in $L^p_t W^{1,p}_x$. We also have weak convergence in $L^2$ on $\varrho_p \dot u_p$ and the weak convergence of $\partial_x u_p$ to $\partial_x u$ in 
$L^2$. Writing
$$\int_{\T} 
(\varrho_p^\gamma - \varrho^\gamma)(t,x) dx
= \int_{\T} 
(\varrho_p^\gamma - \varrho^\gamma
- \gamma \varrho^{\gamma-1}(\varrho_p-\varrho))(t,x) dx
+ \gamma \int_{\T} (\varrho^{\gamma-1} (\varrho_p-\varrho))(t,x) dx.
$$
and
$$\int_0^t \int_{\T} 
[(\varrho_p^\gamma - \varrho^\gamma )\partial_x u](\tau,x) dx d\tau
= \int_0^t\int_{\T} 
[(\varrho_p^\gamma - \varrho^\gamma - \gamma \varrho^{\gamma-1} (\varrho_p-\varrho))\partial_x u
](\tau,x) dx d\tau$$
$$
+ \gamma \int_0^t\int_{\T} (\varrho^{\gamma-1} (\varrho_p-\varrho)\partial_x u)(\tau,x) dx d\tau.
$$
we can conclude obtaining an
inequality 
$$X_p \le \int_0^t X_p + \varepsilon_p(t)
\hbox{ where } \displaystyle X_p= \int_{\T} [\varrho_p^\gamma - \varrho^\gamma - \gamma \varrho^{\gamma-1} (\varrho_p-\varrho)](t,x) dx$$
with some rest $\varepsilon_p(t)$ which converges to zero a.e. as $p\to \infty$. Using that $\varrho_p^\gamma - \varrho^\gamma - \gamma \varrho^{\gamma-1} (\varrho_p-\varrho) \ge 0$, This allows to conclude to the strong convergence of $\varrho_p$ in $L^\gamma((0,T)\times
(0,1))$ and then in ${\mathcal C}(0,T; L^p(0,1))$ for all $p<+\infty$ using the uniform bound with respect to $p$ and the mass equation on $\partial_t\varrho_p$.

\subsubsection{Asymptotic limit on a weak formulation inequality}
Let us now use the fact that $\varrho_p$ strongly converges to $\varrho$ in ${\mathcal C}(0,T; L^p(0,1))$ for all $p<+\infty$.
From the previous estimates we note that we have the convergence written in Theorem \ref{MainTh1}.
We can then pass to the limit in the variational inequality which is satisfied by a global
weak solution of the $p$-laplacian compressible Navier--Stokes equations, namely 
$$\int_0^T\int_{\T} \varrho_p \partial_t u_p (v-u_p)
   + \varrho_p u_p \partial_x u_p (v-u_p) 
   - \int_0^T\int_{\T} |\partial_x v|^{p-2}\partial_x v \, (\partial_x v - \partial_x u_p) 
   - \int_{\T} \varrho_p^\gamma (\partial_x v- \partial_x u_p) \ge 0
$$
with $v\in L^\infty(0,T;W^{1,r}(0,1))$ for all $r<+\infty$ satisfying $|\partial_x v| \le 1$.
This follows the same lines as in the paper by J.F. Rodrigues {\it et al.} \cite{Rodrigues,RodSan}
using
the convergence deduced from the bounds \eqref{convp}.  Note that the third quantity tends to
zero taking test functions satisfying  $|\partial_x v|< 1$. We recall the weak convergence of
$\partial_t u_p$ in $L^2$, the strong convergence of $\varrho_p$ in any $L^p$ because of the strong convergence in $L^2$ we make here and the uniform bound on $\varrho_p$ in $L^\infty$. We also
have strong convergence on $u_p$ because of the uniform bound we have on $\partial_x u$
in any $L^q$ for $q\le p$ and $\partial_t u_p$ in $L^2$.
We get  the  constraint on $|\partial_x u|$ by
showing the following Proposition:
\begin{prop}\label{prop_dx_u} For all $r\in\left(  1,\infty\right)$ we have
\[
\partial_{x}u_{p}\rightharpoonup\partial_{x}u\text{ in }L_{t,x}^{r}\text{.}%
\]
Moreover, it turns out that $\partial_{x}u\in L_{t,x}^{\infty}$ with
$\left\vert \partial_{x}u\right\vert \leq1$. 
\end{prop}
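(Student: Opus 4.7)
The plan is to derive uniform-in-$p$ bounds on $\partial_x u_p$ in every $L^r((0,T)\times\T)$ for $r<\infty$ from the energy estimate, then use weak lower semicontinuity together with sending $r\to\infty$ to obtain the essential bound $|\partial_x u|\leq 1$.

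From the energy inequality established in Theorem \ref{MainTh1} we have $\int_0^T\int_\T |\partial_x u_p|^p\,\dd x\,\dd t \leq \overline{E_0}$. For any fixed $r\in(1,\infty)$ and $p>r$, H\"older's inequality on the finite-measure set $(0,T)\times\T$ gives
$$\|\partial_x u_p\|_{L^r((0,T)\times\T)} \leq (T|\T|)^{\frac{1}{r}-\frac{1}{p}}\,\overline{E_0}^{1/p},$$
which is bounded independently of $p\geq r$, with the bound tending to $(T|\T|)^{1/r}$ as $p\to\infty$. By reflexivity of $L^r$, a subsequence of $\partial_x u_{p}$ converges weakly in $L^r$ to some $g_r$. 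Since $u_p\to u$ strongly in $L^2((0,T)\times\T)$ by Aubin--Lions (using the bounds on $\partial_t u_p$ in $L^2$ and on $u_p$ in $L^p(0,T;W^{1,p})$ stated in \eqref{convp}), $\partial_x u_p\to \partial_x u$ in the sense of distributions, forcing $g_r=\partial_x u$. A diagonal argument over a countable dense set of exponents then produces a single subsequence along which $\partial_x u_p\rightharpoonup \partial_x u$ weakly in $L^r$ for every $r<\infty$.

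Weak lower semicontinuity of the norm then yields
$$\|\partial_x u\|_{L^r((0,T)\times\T)} \leq \liminf_{p\to\infty}\|\partial_x u_p\|_{L^r((0,T)\times\T)} \leq (T|\T|)^{1/r}.$$
Letting $r\to\infty$ and invoking the standard fact that $\|f\|_{L^r}\to \|f\|_{L^\infty}$ for any measurable $f$ on a set of finite measure, we conclude $\|\partial_x u\|_{L^\infty((0,T)\times\T)}\leq 1$, i.e.\ $|\partial_x u|\leq 1$ almost everywhere.

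There is no serious obstacle: the proposition reduces to the uniform energy bound combined with the $L^r$--to--$L^\infty$ passage on a finite-measure domain. The only point requiring a line of care is the compatibility of the weak limits in different $L^r$ spaces, which is secured by identifying each of them with the distributional derivative $\partial_x u$ using the already established strong $L^2$ convergence of $u_p$.
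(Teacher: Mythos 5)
Your proof is correct, and it reaches the conclusion by a route that is recognizably different in its organization from the paper's, while using the same two basic ingredients (H\"older against the $L^p$ energy bound, and weak lower semicontinuity). The paper argues by contradiction through a Chebyshev/level-set estimate: it fixes $\eta>0$, bounds $(1+\eta)\,|\{|\partial_x u|\geq 1+\eta\}|$ from above by $\liminf_p \iint_{\{|\partial_x u|\geq 1+\eta\}}|\partial_x u_p|$ (weak l.s.c.\ of the $L^1$ functional restricted to that set), applies H\"older against $\|\partial_x u_p\|_{L^p}^p\leq\overline{E_0}$, and lets $p\to\infty$ to force the level set to be null. You instead extract uniform-in-$p$ bounds $\|\partial_x u_p\|_{L^r}\leq (T|\T|)^{1/r-1/p}\overline{E_0}^{1/p}$ for each fixed $r$, use weak l.s.c.\ of the $L^r$ norm to pass these to the limit $\partial_x u$, and then send $r\to\infty$ using the standard identity $\lim_{r\to\infty}\|f\|_{L^r(\Omega)}=\|f\|_{L^\infty(\Omega)}$ on a finite-measure set. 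The two methods are morally equivalent, but yours avoids reasoning on level sets of the limit and relies only on textbook facts about $L^r$ norms; it also spells out the identification of the weak limits (via Aubin--Lions and a diagonal argument) that the paper leaves implicit, which is a genuine plus. Conversely, the paper's argument is self-contained in a single display and does not need the $L^r\to L^\infty$ limit passage. Both are acceptable.
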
 

\begin{proof} 
From the last two conditions and the bound on the derivative of $\partial
_{x}u_{p}$ we can deduce an uniform bound on $u_{p}$. 
For reader's convenience, we recall the proof
\begin{align*}
&  \left(  1+\eta\right)  \left\vert \left\{  (t,x):\left\vert \partial
_{x}u\right\vert \geq1+\eta\right\}  \right\vert \\
&  \leq\int_{0}^{t}\int_{\T}\left\vert \partial_{x}u\right\vert 1_{\left\{
(t,x):\left\vert \partial_{x}u\right\vert \geq1+\eta\right\}  }\leq\lim\inf%
{\displaystyle\iint\limits_{\left\{  (t,x):\left\vert \partial_{x}u\right\vert
\geq1+\eta\right\}  }}
\left\vert \partial_{x}u_{p}\right\vert \\
&  \leq\left\vert \left\{  (t,x):\left\vert \partial_{x}u\right\vert
\geq1+\eta\right\}  \right\vert ^{1-\frac{1}{p}}\lim\inf\left(  \int_{0}%
^{t}\int_{\T}\left\vert \partial_{x}u_{p}\right\vert ^{p}\right)
^{\frac{1}{p}}\\
&  \leq\left\vert \left\{  (t,x):\left\vert \partial_{x}u\right\vert
\geq1+\eta\right\}  \right\vert ^{1-\frac{1}{p}}\left(  \int_{\T}%
(\varrho_{0}\log\varrho_{0}-\varrho_{0})\mathrm{d}x\right)  ^{\frac{1}{p}}%
\end{align*}
such that for $p\rightarrow\infty$ we get that for all $\eta>0$ :
\[
\left\vert \left\{  (t,x):\left\vert \partial_{x}u\right\vert \geq
1+\eta\right\}  \right\vert =0.
\]
\end{proof}

\subsubsection{Asymptotic limit in the strong form equality} 
We will use that $\varrho_p$ strongly converges to $\varrho$ in ${\mathcal C}(0,T; L^p(\T))$ for all $p<+\infty$.  This part is dedicated to a proof which is largely inspired by the result 
in \cite[Theorem 2.3.]{DPEvPr}.
 Denoting $\tau_p = |\partial_x u_p|^{p-2} \partial_x u_p$, we have
$$\partial_x \tau_p
= a \partial_x         \varrho_p^\gamma
  + \varrho_p \dot u_p = f_p.$$
We know that $\tau_p$ weakly converges to $\tau$ in $L^1_{t,x}$
using the extra integrability on $\tau_p$. Note that we have
$$ \int_0^T\int_{\T}  |\tau| \le 
\liminf_{p\to +\infty}  \int_0^T\int_{\T}  |\tau_p|
\le \liminf_{p\to \infty}  \int_0^T\int_{\T}  |\partial_x u_p|^{p-1}
\le \liminf_{p\to \infty} \bigl( \int_0^T\int_{\T}  |\partial_x u_p|^{p}
\bigr)^{(p-1)/p}
$$
Note that
$$  \int_0^T\int_{\T}  |\partial_x u_p|^p 
= < f_p, u_p>_{L^2(0,T;H^{-1}(\T))\times L^2(0,T;H^1(\T))}
$$
and that we can prove using the convergence in theorem \ref{MainTh1} that
$$<f_p, u_p>_{L^2(0,T;H^{-1}(\T))\times L^2(0,T;H^1(\T))}
\to <f,u>_{L^2(0,T;H^{-1}(\T))\times L^2(0,T;H^1(\T))}$$
where
$$ f = a\partial_x \varrho^\gamma + \varrho \dot u
$$ 
and with 
$$<f,u>_{L^2(0,T;H^{-1}(\T))\times L^2(0,T;H^1(\T))}= \int_0^T\int_{\T}  \tau \partial_x u.$$
Thus we have proved that
$$\int_0^T\int_{\T} |\tau| \le 
\int_0^T\int_{\T} \tau \partial_x u.
$$
We remind that we have already proved that $$
|\partial_x u|\le 1 
\hskip1cm {\it a.e.}$$ 
and therefore we get
$$|\tau| = \tau \partial_x u .$$
Assuming $|\tau|\not = 0$, then multiplying 
the relation by $\tau/|\tau|$ we get
$$\tau = |\tau| \partial_x u$$ 
relation that extend for $|\tau|=0$.
Denoting $\pi = |\tau|$, we then conclude that 
$$\tau = \pi \partial_x u
\qquad
\hbox{ with }
\qquad (|\partial_x u|-1)\, \pi=0,
$$
which in turn allows us to obtain (\ref{limit-system}).

\section{The multi-dimensional semi stationary case}
The current section is devoted to prove Theorems \ref{th_multid} and \ref{th_multid_ex}.

\begin{proof}[Proof of Theorem \ref{th_multid}.]
    From the uniform estimate (\ref{energy}), up to a subsequence we have the weak convergence
    \begin{equation}\label{convergence}\begin{aligned} \varrho_p\rightharpoonup^*\varrho \quad &\text{in} \quad L^\infty(0,T;L^\gamma), \\
u_p\rightharpoonup u \quad &\text{in} \quad L^d(0,T;W^{1,d}), \\
\varrho_p^\gamma \rightharpoonup^* \overline{\varrho^\gamma} \quad &\text{in} \quad L^\infty(0,T;\mathcal{M}).
\end{aligned}\end{equation}

Moreover, we have the following estimate: 
\begin{prop}\label{Du_bdd} The limit $u$ satisfies $|\D u|\leq 1$ a. e. 
\end{prop}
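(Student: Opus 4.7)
The plan is to adapt the 1D argument of Proposition \ref{prop_dx_u} to the multidimensional setting. The key input is the uniform bound
\[ \int_0^T\int_{\T^d} |\D u_p|^p\;\dd x\dd t \leq \frac{1}{\gamma-1}\int_{\T^d}\varrho_0^\gamma\;\dd x =: K, \]
which is finite and independent of $p$ by assumption on the initial density.

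For fixed $\eta>0$ I would introduce the level set
\[ E_\eta := \{(t,x)\in[0,T]\times\T^d : |\D u(t,x)|\geq 1+\eta\}. \]
Using that the weak convergence $u_p\rightharpoonup u$ in $L^d(0,T;W^{1,d}(\T^d))$ implies $\D u_p\rightharpoonup \D u$ in $L^d$, the weak lower semicontinuity of the $L^1$ norm on $E_\eta$ (which has finite measure) together with Hölder's inequality on the level set gives
\[ (1+\eta)|E_\eta| \leq \int_{E_\eta} |\D u|\;\dd x\dd t \leq \liminf_{p\to\infty}\int_{E_\eta} |\D u_p|\;\dd x\dd t \leq \liminf_{p\to\infty}|E_\eta|^{1-\frac{1}{p}}\,K^{\frac{1}{p}}. \]
Since $K^{1/p}\to 1$ as $p\to\infty$, this yields $(1+\eta)|E_\eta|\leq |E_\eta|$, so $|E_\eta|=0$. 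Letting $\eta\to 0$ along a countable sequence gives $|\D u|\leq 1$ almost everywhere.

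The only point that deserves some care is the weak lower semicontinuity step: strictly speaking, the bound $u_p\rightharpoonup u$ in $L^d(0,T;W^{1,d})$ gives weak convergence of $\D u_p$ only in $L^d$, but by Hölder and the uniform $L^p$ bound, for every finite $r$ and $p\geq r$ one has $\|\D u_p\|_{L^r}\leq K^{1/p}|[0,T]\times\T^d|^{1/r-1/p}$, so passing to a (diagonal) subsequence the convergence $\D u_p\rightharpoonup \D u$ actually holds weakly in $L^r$ for every $r<\infty$; the identification of the weak limit with $\D u$ follows from uniqueness of distributional limits. With this in hand, the weak-$L^1$-on-$E_\eta$ inequality above is justified, and nothing else in the argument presents an obstacle.
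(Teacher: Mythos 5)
Your proof is correct and follows essentially the same argument as the paper: Chebyshev on the level set $\{|\D u|\geq 1+\eta\}$, weak lower semicontinuity to pass from $\D u$ to $\D u_p$, Hölder with exponent $p$ against the uniform energy bound $\|\D u_p\|_{L^p}^p \leq K$, and the observation that $K^{1/p}\to 1$. The extra paragraph justifying weak convergence of $\D u_p$ in every $L^r$ is a fair point to flag, though the paper treats it as implicit.
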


\begin{proof}
    This is the multidimensional equivalent of the estimate from Proposition \ref{prop_dx_u}. Assume that there exist $\eta>0$ such that
    \[ |\{(t,x):|\D u|>1+\eta\}| > 0. \]
    From the Chebyshev inequality, we have
    \[\begin{aligned} |\{|\D u|>1+\eta\} \leq & \frac{1}{1+\eta}\int_0^T\int_{\T^d} |\D u|\mathbbm{1}_{|\D u|>1+\eta}\;\dd x \leq \frac{1}{1+\eta}\liminf_{p\to\infty}\int_0^T\int_{\T^d} |\D u_p|\mathbbm{1}_{|\D u|>1+\eta} \;\dd x \\
    \leq & \frac{1}{1+\eta}\liminf_{p\to\infty}\left(\|\D u_p\|_{L^p}|\{|\D u|>1+\eta\}|^{1-1/p}\right). \end{aligned}\]
    Since $\|\D u_p\|_p\leq \left(\frac{1}{\gamma-1}\int_{\T^d}\varrho_0^\gamma\;\dd x\right)^{1/p}\leq C^{1/p}$, dividing the both sides by $|\{|\D u|>1+\eta\}|$, we get
    \[ 1\leq \frac{1}{1+\eta}\liminf_{p\to\infty}\left(C^{1/p}|\{|\D u|>1+\eta\}|^{-1/p}\right) = \frac{1}{1+\eta}. \]
    Since $\eta>0$, we got a contradiction and thus $|\{|\D u|>1\}|=0$.
\end{proof}

With this information at hand, we will show that $\varrho_p\to\varrho$ strongly.
First, from the continuity equation, up to a subsequence we have $\varrho_p\to\varrho$ in $C_{weak}(0,T;L^\gamma)$, and in consequence
\[ \varrho_p\to\varrho \quad \text{in} \quad C(0,T;W^{-1,\frac{d}{d-1}}). \]
In particular, $\overline{\varrho u}=\varrho u$ and $(\varrho,u)$ satisfy the continuity equation
\[ \partial_t\varrho + \ddiv(\varrho u) = 0 \]
in the renormalized sense (since $\nabla u$ is integrable up to any power). In conclusion, we also have
\[ \partial_t\varrho^\gamma + \ddiv(\varrho^\gamma u) + (\gamma-1)\varrho^\gamma\ddiv u = 0 \]
in the sense of distributions. Since $|\ddiv u|\leq 1$ a. e., we can also improve the regularity of $\varrho$:
\begin{prop}\label{rho_bdd} The limit density $\varrho$ satisfies the estimate
\[ \|\varrho\|_{L^\infty([0,T]\times\T^d)} \leq \|\varrho_0\|_{L^\infty(\T^d)}e^t. \]
\end{prop}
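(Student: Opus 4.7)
The plan is to exploit the renormalized continuity equation (already available since $\nabla u$ is integrable to any power) together with the pointwise bound on $\ddiv u$ coming from Proposition~\ref{Du_bdd}, and then propagate an $L^k$ bound for large $k$ before sending $k\to\infty$.

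\textbf{Step 1: renormalization.} Since $(\varrho,u)$ satisfies the continuity equation in the renormalized sense, for any $b\in C^1([0,\infty))$ with suitable growth we have
\[
\partial_t b(\varrho) + \ddiv\bigl(b(\varrho)u\bigr) + \bigl(b'(\varrho)\varrho - b(\varrho)\bigr)\ddiv u = 0
\]
in $\mathcal{D}'((0,T)\times\T^d)$. Choosing $b(s)=s^k$ for $k\geq 1$ yields
\[
\partial_t \varrho^k + \ddiv(\varrho^k u) + (k-1)\varrho^k \ddiv u = 0.
\]

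\textbf{Step 2: $L^k$ estimate.} Integrating over $\T^d$ (the transport term vanishes by periodicity) and using the bound $|\ddiv u|\leq 1$ a.e.\ (a consequence of $|\D u|\leq 1$), I would obtain
\[
\frac{d}{dt}\int_{\T^d}\varrho^k\,\dd x \leq (k-1)\int_{\T^d}\varrho^k\,\dd x,
\]
so that Gr\"onwall's lemma gives
\[
\|\varrho(t)\|_{L^k(\T^d)} \leq \|\varrho_0\|_{L^k(\T^d)}\,e^{\frac{k-1}{k}t}.
\]

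\textbf{Step 3: passage to $L^\infty$.} Since $\varrho_0\in L^\infty(\T^d)\subset L^k(\T^d)$ for every $k$, sending $k\to\infty$ in the previous bound (using $\|\varrho_0\|_{L^k}\to\|\varrho_0\|_{L^\infty}$ and $\frac{k-1}{k}\to 1$, together with the fact that $\|\varrho(t)\|_{L^\infty}=\lim_{k\to\infty}\|\varrho(t)\|_{L^k}$ on a bounded measure space) yields the desired estimate
\[
\|\varrho\|_{L^\infty([0,T]\times\T^d)}\leq \|\varrho_0\|_{L^\infty(\T^d)}\,e^{t}.
\]

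The argument is essentially routine once the renormalized formulation is available; the only delicate point is justifying the renormalization step for powers $s^k$ with large $k$, but this follows from the high integrability of $\nabla u$ (bounded in any $L^q$) and the known $L^\gamma$ control of $\varrho$, so no separate truncation argument is needed. If one is cautious about this, a clean alternative is to first derive the bound along the flow of $u$ (which is well-defined since $u\in L^1_tW^{1,q}_x$ for every $q<\infty$, so DiPerna--Lions theory applies), giving $\varrho(t,X_t(x))\leq \varrho_0(x)\exp(\int_0^t \|\ddiv u\|_{L^\infty}\,\dd s)$, and then conclude directly.
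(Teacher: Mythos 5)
Your main argument follows the same skeleton as the paper — renormalize the continuity equation, integrate in space using $|\ddiv u|\le 1$, apply Gr\"onwall, and send the exponent to infinity — but it contains a genuine gap at Step~1, which you notice and then incorrectly dismiss. The renormalized continuity equation with $b(s)=s^k$ can only be written down, let alone justified, if $\varrho^k\in L^1_{t,x}$ and $\varrho^k\ddiv u\in L^1_{t,x}$. A priori you only have $\varrho\in L^\infty(0,T;L^\gamma)$, so for $k>\gamma$ the quantities in your equation are not even well-defined. The high integrability of $\nabla u$ takes care of the commutator estimate (it lets you pair $\varrho\in L^{1+\epsilon}$ with $\nabla u\in L^q$), but it does nothing about the superlinear growth of $b(s)=s^k$: that is a separate issue, and it is exactly what you are trying to prove. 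Running the argument at $k=\gamma$ only recovers the energy bound and gives no bootstrap, so you cannot climb to higher $k$ directly. The truncation $T_k$ in the paper's proof is therefore not a cosmetic precaution; it is the device that makes the renormalization function $P_k$ grow linearly and hence admissible given only $\varrho\in L^\gamma$, and the $k\to\infty$ passage via monotone convergence is what upgrades the integrability before the $q\to\infty$ step. Your sentence ``no separate truncation argument is needed'' is the precise point where the proof breaks down.

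Your closing alternative, by contrast, is sound: since $u\in L^r(0,T;W^{1,r})$ for every $r<\infty$ and $\ddiv u\in L^\infty$, the DiPerna--Lions/Ambrosio theory gives a well-defined regular Lagrangian flow, and uniqueness of distributional solutions in $L^\infty_t L^{p'}_x$ (take $r$ large so that $r'\le\gamma$) identifies $\varrho$ with the transported density, from which $\varrho(t,X_t(x))=\varrho_0(x)\exp\bigl(-\int_0^t\ddiv u(s,X_s(x))\,\dd s\bigr)\le\varrho_0(x)e^t$ follows directly. This is a genuinely different route from the paper's truncation-plus-Gr\"onwall argument: it is shorter and avoids the double limit $k\to\infty$, $q\to\infty$, at the cost of invoking the full Lagrangian uniqueness theory rather than working purely in the Eulerian/renormalized framework. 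Either approach is acceptable, but if you keep the Eulerian one you must keep the truncation.
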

\begin{proof} 
    Let $T_k\in C^\infty([0,\infty))$ be a truncation operator, satisfying $T_k(z)=z$ for $z\leq k$, $T_k(z)=k+1$ for $z>2k$, $T_k'(z)>0$ and $T_k(z)\nearrow z$ when $k\to\infty$. Define
    \[ P_k(\varrho) = \varrho\int_0^\varrho\frac{(T_k(z))^q}{z^2}\;\dd z \quad \text{for} \quad q>1. \]
    It is easy to see that 
    \[ P_k(\varrho)\geq \left(\frac{k}{k+1}\right)^{q-1}\frac{1}{q-1}(T_k(\varrho))^q. \]
    For $\varrho\leq k$ we have $P_k(\varrho)=\frac{1}{q-1}\varrho^q = \frac{1}{q-1}(T_k(\varrho))^q$, and for $\varrho>k$
    \[ P_k(\varrho) \geq \varrho\int_0^k \frac{(T_k(z))^q}{z^2}\;\dd z = \frac{\varrho}{q-1}k^{q-1} = \frac{1}{q-1}\varrho(k+1)^{q-1}\left(\frac{k}{k+1}\right)^{q-1}\geq \left(\frac{k}{k+1}\right)^{q-1}\frac{1}{q-1}(T_k(\varrho))^q. \]
    Since $\varrho$ satisfies the continuity equation in the renormalized sense, it holds
    \[ \partial_t P_k(\varrho) + \ddiv(P_k(\varrho)u)+(T_k(\varrho))^q\ddiv u = 0. \]
    Integrating it over $\T^d$, we get
    \[ \frac{\dd}{\dd t}\int_{\T^d}P_k(\varrho)\;\dd x = -\int_{\T^d}(T_k(\varrho))^q\ddiv u\;\dd x \leq \int_{\T^d}(T_k(\varrho))^q\;\dd x \]
    and in consequence
    \[ \int_{\T^d}P_k(\varrho)\;\dd x \leq \int_{\T^d} P_k(\varrho_0)\;\dd x + \int_0^t\int_{\T^d}(T_k(\varrho))^q\;\dd x\dd s. \]
    Therefore, using the relation between $T_k$ and $P_k$, we get
    \[ \int_{\T^d}(T_k(\varrho))^q\;\dd x \leq (q-1)\left(\frac{k+1}{k}\right)^{q-1}\int_{\T^d}P_k(\varrho_0)\;\dd x + (q-1)\left(\frac{k+1}{k}\right)^{q-1}\int_0^t\int_{\T^d}(T_k(\varrho))^q\;\dd x \]
    and from Gronwall's lemma
    \begin{equation}\label{gronwall_k} \int_{\T^d} (T_k(\varrho))^q\;\dd x \leq (q-1)\left(\frac{k+1}{k}\right)^{q-1}\int_{\T^d}P_k(\varrho_0)\;\dd x \cdot\exp\left((q-1)\left(\frac{k+1}{k}\right)^{q-1}t\right). \end{equation}
    From the Monotone Convergence Theorem, we know that
    \[ \lim_{k\to\infty}\int_{\T^d}(T_k(\varrho))^q\;\dd x = \int_{\T^d}\varrho^q\;\dd x \quad \text{for a. a. } t\in[0,T]. \]
    Moreover, by the Dominated Convergence Theorem  it holds
    \[ \lim_{k\to\infty}\int_{\T^d} P_k(\varrho_0)\;\dd x = \frac{1}{q-1}\int_{\T^d}\varrho_0^q\;\dd x. \]
    Therefore passing to the limit with $k$ in (\ref{gronwall_k}) we obtain
    \[ \|\varrho(t,\cdot)\|_{L^q(\T^d)}\leq \|\varrho_0\|_{L^q(\T^d)}e^{(1-1/q)t} \quad \text{for a. a. } t\in [0,T], \]
    and since $\varrho_0\in L^\infty(\T^d)$, passing to the limit with $q\to\infty$ we get the bound
    \[ \|\varrho(t,\cdot)\|_{L^\infty(\T^d)}\leq \|\varrho_0\|_{L^\infty(\T^d)}e^t. \]
\end{proof}
Having the $L^\infty$ bounds on $\varrho$ and $\ddiv u$, we can apply the final argument. We test (\ref{eq_p}) by $\psi(t)u_p$, where $\psi\in C_0^\infty(0,T)$, $\psi\geq 0$. Then, we get
\begin{equation}\label{energy1}
   \int_0^T\psi(t)\int |\D u_p|^p\;\dd x\dd t -\frac{1}{\gamma-1}\int_0^T\psi'(t)\int \varrho_p^\gamma\;\dd x\dd t = 0.
\end{equation}
On the other hand, testing (\ref{eq_p}) by the (localized in time) limit $\psi(t)u$, we get
\begin{equation}\label{energy2} \int_0^T\psi(t)\int |\D u_p|^{p-2}\D u_p:\D u\;\dd x\dd t - \int_0^T\psi(t)\int \varrho_p^\gamma\ddiv u\;\dd x\dd t = 0. \end{equation}
From the monotonicity of the function $\mathbb{S}_p(\D u)=|\D u|^{p-2}\D u$, we get the inequality:
\[\begin{aligned} |\D u_p|^p-|\D u_p|^{p-2}\D u_p:\D u = & \mathbb{S}_p(\D u_p):(\D u_p-\D u) \\
= & (\mathbb{S}_p(\D u_p)-\mathbb{S}(\D u)):(\D u_p-\D u) + \mathbb{S}_p(\D u):(\D u_p-\D u) \\
\geq & \mathbb{S}_p(\D u):(\D u_p-\D u). \end{aligned}\]
Therefore substracting (\ref{energy2}) from (\ref{energy1}) and using the equation on $\varrho^\gamma$, we get
\begin{multline}\label{main_ineq} \int_0^T\psi(t)\int \mathbb{S}_p(\D u):(\D u_p-\D u)\;\dd x\dd t -\frac{1}{\gamma-1}\int_0^T\psi'(t)\int \varrho_p^\gamma-\varrho^\gamma\;\dd x\dd t \\
\leq -\int_0^T\psi(t)\int (\varrho_p^\gamma-\varrho^\gamma)\ddiv u\;\dd x\dd t. \end{multline}
Using the convexity of a function $\varrho\mapsto\varrho^\gamma$, we know that
\[ \varrho_p^\gamma - \varrho^\gamma - \gamma\varrho^{\gamma-1}(\varrho_p-\varrho) \geq 0. \]
Thus, we rewrite (\ref{main_ineq}) as
\[ -\frac{1}{\gamma-1}\int_0^T\psi'(t) X_p(t)\;\dd t \leq \int_0^T\psi(t) \left(\|\ddiv u\|_{L^\infty}X_p(t) + \varepsilon_p(t)\right)\;\dd t -\frac{1}{\gamma-1}\int_0^T\psi'(t)g_p(t)\;\dd t, \]
where 
\[ X_p(t) = \int \varrho_p^\gamma-\varrho^\gamma - \gamma\varrho^{\gamma-1}(\varrho_p-\varrho)\;\dd x \geq 0, \]
\[ \varepsilon_p(t) = - \gamma\int \varrho^{\gamma-1}(\varrho_p-\varrho)\ddiv u\;\dd x - \int \mathbb{S}_p(\D u):(\D u_p-\D u)\;\dd x, \]
and
\[ g_p(t) = \gamma\int \varrho^{\gamma-1}(\varrho_p-\varrho)\;\dd x. \]

For a fixed $t\in (0,T)$, choosing $\psi(s)=\eta_\delta(t-s)$, where $\eta_\delta$ is a standard mollifier, we get
\[ (X_p)_\delta'(t) \leq a (X_p)_\delta(t) + (\gamma-1)(\varepsilon_p)_\delta(t) +(g_p)_\delta'(t),  \]
where $a=(\gamma-1)\|\ddiv u\|_{L^\infty([0,T]\times\Omega)}$ and $(f)_\delta=f\ast\eta_\delta$. Therefore, from Gronwall's lemma, for $0< s<t<T$ we get
\[ (X_p)_\delta(t) \leq (X_p)_\delta(s)e^{a(t-s)} + (g_p)_\delta(t)-(g_p)_\delta(s)e^{a(t-s)} + \int_s^t \big((\gamma-1)(\varepsilon_p)_\delta(\tau) + a (g_p)_\delta(\tau)\big)e^{a(t-\tau)}\;\dd\tau.  \]
Since $X_p,\varepsilon_p\in L^1(0,T)$ and $g_p\in C(0,T)$, after passing to the limit with $\delta$ we get for almost all $0<s<t<T$
\[ X_p(t) \leq X_p(s)e^{a(t-s)} + g_p(t)-g_p(s)e^{a(t-s)} + \int_s^t \big((\gamma-1)\varepsilon_p(\tau)+ag_p(\tau)\big)e^{a(t-\tau)}\;\dd\tau. \]

Taking $\frac{1}{h}\int_0^h\;\dd s$ on both sides, we further have
\[\begin{aligned} X_p(t) \leq & \frac{1}{h}\int_0^h X_p(s)e^{a(t-s)}\;\dd s + g_p(t)-\frac{1}{h}\int_0^hg_p(s)e^{a(t-s)}\;\dd s \\
&+ \frac{1}{h}\int_0^h\int_s^t \big((\gamma-1)\varepsilon_p(\tau)+ag_p(\tau)\big)e^{a(t-\tau)}\;\dd\tau\dd s. \end{aligned}\]
Using Lemma \ref{transport_lemma}, we have
\[ \lim_{h\to 0}\frac{1}{h}\int_0^h X_p(s)\;\dd s = \lim_{h\to 0}\frac{1}{h}\int_0^h\int (\varrho_p^\gamma-\varrho_0^\gamma)\;\dd x\dd s + \lim_{h\to 0}\int_0^h\int (\varrho_0^\gamma-\varrho^\gamma)\;\dd x\dd s - \lim_{h\to 0}\int_0^h g_p(s)\;\dd s =0. \]
Therefore passing to the limit with $h$, we end up with
\[ X_p(t) \leq g_p(t) + \int_0^t \big((\gamma-1)\varepsilon_p(\tau)+ag_p(\tau)\big)e^{a(t-\tau)}\;\dd\tau. \]
Since $\varrho_p\to\varrho$ in $C_{\textrm{weak}}(0,T;L^\gamma)$ and $\varrho\in L^\infty([0,T]\times\Omega)$, we have for all $t\in [0,T]$
\[ g_p(t)=\int \gamma\varrho^{\gamma-1}(\varrho_p-\varrho)(t,\cdot)\;\dd x \to 0 \]
as $p\to\infty$. Moreover, 
\[ \varepsilon_p(t)=\int_0^t\int \mathbb{S}_p(\D u):(\D u_p-\D u) \;\dd x\dd s \to 0, \]
as $\mathbb{S}_p(\D u)=|\D u|^p\to \mathbbm{1}_{|\D u|=1}$ strongly as $p\to\infty$ and $\D u_p\to\D u$ weakly.

In conclusion, we get that for a. a. $t\in[0,T]$
\[ \lim_{p\to\infty}X_p(t) = 0 \]
and thus, since $X_p=\int (\varrho_p^\gamma-\varrho^\gamma)\;\dd x - g_p(t)$, we have
\[ \lim_{p\to\infty}\int \varrho_p^\gamma\;\dd x = \int \varrho^\gamma\;\dd x, \]
which, together with the weak convergence $\varrho_p\rightharpoonup^*\varrho$ in $L^\infty(0,T;L^\gamma)$, implies the strong convergence of $\varrho_p$ in $L^\gamma(0,T;L^\gamma)$.
\end{proof}

\subsection{Construction of solutions (Proof of Theorem \ref{th_multid_ex})}

In the end of this section, we will prove Theorem \ref{th_multid_ex}, by using the global existence result from \cite{FLM} and the compactness from Theorem \ref{th_multid}.

\begin{proof}[Proof of Theorem \ref{th_multid_ex}]
Fix $\varepsilon, b>0$ and let
    \[ \lambda(z) = \frac{\varepsilon b}{(1-(bz)^a)^{1/a}}. \]
In order to construct a solution, we use the result from \cite{FLM}, where the authors analysed the system
\begin{equation}\label{NS_lambda}\begin{aligned}
\partial_t\varrho + \ddiv(\varrho u) &= 0, \\
\partial_t(\varrho u) + \ddiv(\varrho u\otimes u) - \ddiv\left(2\mu(|\D^du|)\D^du\right) - \nabla(\lambda(|\ddiv u|)\ddiv u) + \nabla\varrho^\gamma &= 0,
\end{aligned}\end{equation}
where $\D^du = \D u-\frac{1}{d}\ddiv u\mathbb{I}$ is the deviatoric part of $\D u$ and
\[ \mu(|\D^du|)=\mu_0(1+|\D^du|^2)^{\frac{p-2}{2}}. \]
They prove that for a sufficiently large $p$ there exists a global variational solution to (\ref{NS_lambda}), i. e. $(\varrho,u)$ satisfies the continuity equation and for each $\varphi\in C_0^\infty([0,T]\times\T^d)$
    \begin{equation}\label{NS_lambda_variational}
    \begin{aligned} \int_0^t\int_{\T^d}\Lambda(|\ddiv \varphi|)\;\dd x\dd s \geq & \int_0^t\int_{\T^d}\Lambda(|\ddiv u|)\;\dd x\dd s + \left[\frac{1}{2}\int_{\T^d}\varrho|u|^2\;\dd x\right]\Big|_0^t - \left[\int_{\T^d}\varrho u\cdot\varphi\;\dd x\right]\Big|_0^t \\
    &+ \int_0^t\int_{\T^d}\varrho u\cdot\partial_t\varphi + \varrho u\otimes u:\nabla\varphi;\dd x\dd s \\
    &+ \int_0^t\int_{\T^d}\mu(|\D^d u|)\D^d u:(\D^d u-\D^d\varphi)\;\dd x\dd s \\
    &- \int_0^t\int_{\T^d} \varrho^\gamma(\ddiv u-\ddiv\varphi)\;\dd x\dd s, \end{aligned}\end{equation}
    where $\Lambda$ is given by
    \[ \left\{\begin{aligned}
        \Lambda'(z)=z\lambda(z) \quad \text{for}& \quad |z|<\frac{1}{b}, \\
        \Lambda(z) = +\infty \quad \text{for}& \quad |z|\geq\frac{1}{b} 
    \end{aligned}\right. \]
    
Note that although the setting in \cite{FLM} is different than in our case, from the proof it is clear that the same result holds without the presence of the convective term and if we replaced $\D^d u$ by $\D u$ and put $\mu(|\D u|)=|\D u|^{p-2}$. In particular, there exists a variational solution to
\begin{equation}\label{lambda}
        \begin{aligned}
            \partial_t\varrho + \ddiv(\varrho u) &= 0, \\
            -\ddiv(|\D u|^{p-2}\D u) - \nabla(\lambda(|\ddiv u|)\ddiv u) + \nabla\varrho^\gamma &= 0,
        \end{aligned}
    \end{equation}
and the variational inequality in this case has the form
\begin{equation}\label{lambda_variational}
    \begin{aligned} \int_0^t\int_{\T^d}\Lambda(|\ddiv \varphi|)\;\dd x\dd s \geq & \int_0^t\int_{\T^d}\Lambda(|\ddiv u|)\;\dd x\dd s + \int_0^t\int_{\T^d}|\D u|^{p-2}\D u:(\D u-\D\varphi)\;\dd x\dd s \\
    &- \int_0^t\int_{\T^d} \varrho^\gamma(\ddiv u-\ddiv\varphi)\;\dd x\dd s. \end{aligned}\end{equation}
Having the solution $(\varrho_p,u_p)$ to (\ref{lambda}), we can now pass to the limit with $\varepsilon\to 0$ and $p\to\infty$ in order to obtain (\ref{eq_limit}). For simplicity we set $a=1$ (then $\Lambda$ is explicitly given by $-\frac{\varepsilon}{b}\ln(1-b|z|)$ for $|z|<\frac{1}{b}$). Since we know that for the limit we have $|\ddiv u|\leq 1$, we also set $b<1$. This ensures us that 
\[ \int_0^t\int_{\T^d}\Lambda(|\ddiv u|)\;\dd x\dd s <\infty \]
for fixed $\varepsilon$.
    
    Let us take $(\varrho_p,u_p)$ being a solution to (\ref{lambda_variational}). By putting $\varphi=0$ in (\ref{lambda_variational}), since $\Lambda\geq 0$, we recover the energy estimate (\ref{energy}). Therefore there exists a limit $(\varrho,u)$ such that up to a subsequence $(\varrho_p,u_p)$ converges to it as in (\ref{convergence}). Note that from Propositions \ref{Du_bdd} and \ref{rho_bdd} we know that $|\D u|\leq 1$ and $\|\varrho(t,\cdot)\|_{L^\infty}\leq \|\varrho_0\|_{L^\infty}e^t$. In particular, this means that $u$ is an admissible test function for (\ref{lambda_variational}) and since $b<1$, 
    \[ \Lambda(|\ddiv u|) \leq \frac{\varepsilon }{b}|\ln(1-b)|<+\infty \quad \text{a. e.} \]
    Putting $\varphi=u$ and using the renormalized continuity equation and monotonicity of $\mathbb{S}_p$ we get
    \[\begin{aligned} \frac{1}{\gamma-1}\int_{\T^d}(\varrho_p^\gamma-\varrho^\gamma)(t,x)\;\dd x \leq& -\int_0^t\int_{\T^d}(\varrho_p^\gamma-\varrho^\gamma)\ddiv u\;\dd x\dd t -\int_0^t\int_{\T^d}\mathbb{S}_p(\D u):(\D u_p-\D u)\;\dd x\dd s \\
    &+ \int_0^t\int_{\T^d}\Lambda(|\ddiv u|)\;\dd x\dd s - \int_0^t\int_{\T^d}\Lambda(|\ddiv u_p|)\;\dd x\dd s. \end{aligned}\] 
    Finally, by the convexity of $\Lambda$ we have
    \[\begin{aligned} \int_0^t\int_{\T^d}\Lambda(|\ddiv u|)\;\dd x\dd s - \int_0^t\int_{\T^d}\Lambda(|\ddiv u_p|)\;\dd x\dd s &\leq \int_0^t\int_{\T^d}\Lambda'(|\ddiv u|)(\ddiv u-\ddiv u_p)\;\dd x\dd s \\
    &= \varepsilon\int_0^t\int_{\T^d}\frac{b}{1-b|\ddiv u|}(\ddiv u_p-\ddiv u)\;\dd x\dd s. \end{aligned}\]
    In the end, taking $\varepsilon=\frac{1}{p}$ we obtain the inequality
    \[ \frac{1}{\gamma-1}X_p(t) \leq \|\ddiv u\|_{L^\infty}\int_0^t X_p(s)\;\dd s + \varepsilon_p(t), \]
    where similarly as before
    \[ X_p(t) = \int_{\T^d}\varrho_p^\gamma-\varrho^\gamma-\gamma\varrho^{\gamma-1}(\varrho_p-\varrho)\;\dd x \geq 0 \]
    and
    \[\begin{aligned} \varepsilon_p(t) =& \gamma\int_{\T^d}\varrho^{\gamma-1}(\varrho_p-\varrho)\;\dd x -\gamma\int_0^t\int_{\T^d}\varrho^{\gamma-1}(\varrho_p-\varrho)\ddiv u\;\dd x\dd s \\
    &- \int_0^t\int_{\T^d}\mathbb{S}_p(\D u):(\D u_p-\D u)\;\dd x\dd s + \frac{1}{p}\int_0^t\int_{\T^d}\Lambda'(|\ddiv u|)(\ddiv u-\ddiv u_p)\;\dd x\dd s \end{aligned}\]
    which goes to zero a. e. as $p\to\infty$. Therefore using Gronwall's lemma similarly as before we have
    \[ \lim_{p\to\infty}X_p(t)=0 \quad \text{a. e.}, \]
    which implies the strong convergence of $\varrho$. This allows us to obtain the equation (\ref{eq_limit}) in the limit.
\end{proof}

\section{A Singular approach to get the same limit system}
Recently in \cite{BPZ} an unilateral constraint related to the maximal packing for a compressible Navier-Stokes  system has been obtained as a singular limit ($\varepsilon$ tends to infinity) of the usual compressible Navier-Stokes system
\begin{equation}
\left\{
\begin{aligned}
&\partial_t \varrho_\varepsilon + \partial_x (\varrho_\varepsilon u_\varepsilon) = 0,\\
&\partial_t (\varrho_\varepsilon u_\varepsilon) + \partial_x (\varrho_\varepsilon u_\varepsilon^2)
- \mu \partial_x^2 u_\varepsilon 
+   \varepsilon \partial_x \frac{\varrho_\varepsilon^\gamma}{(1-\varrho_\varepsilon)^\beta}
+ a \partial_x \varrho_\varepsilon^\gamma = 0
\end{aligned}
\right.\label{BrPeZa}
\end{equation}
with $a>0$, $\gamma>1$ and $\gamma,\beta >1$.
More precisely, when $\varepsilon$ tends to zero, the authors showed that for some topology (that we do not precise here):
$$\lim_{\varepsilon \to 0}
 \varrho_\varepsilon \to \varrho, \qquad \lim_{\varepsilon \to 0}
\varepsilon \frac{\varrho_\varepsilon^\gamma}  {(1-\varrho_\varepsilon)^\beta} = \pi$$ 
with 
$$0\le \varrho \le 1, \qquad 
   \pi\ge 0 \hbox{ with } \pi (1-\varrho)= 0$$
where $\pi$ is the Lagrangian multiplier associated to the maximal packing constraint $\varrho =1.$ The limit system
reads
\begin{equation}
\left\{
\begin{aligned}
& \partial_t\varrho + \partial_x(\varrho u) = 0,,\\
&\partial_x(\varrho u) + \partial_x(\varrho u^2) 
- \mu \partial_x^2 u + \partial_x \pi
+ a \partial_x \varrho^\gamma = 0.
\end{aligned}
\right.\label{BrPeZa2}
\end{equation}
Note that such singular limit has been extended to multi-dimensional case  in \cite{PerZat}. Let us now consider the following one-dimensional system
\begin{equation}
\left\{
\begin{aligned}
& \partial_t \varrho_\varepsilon 
+ \partial_x (\varrho_\varepsilon u_\varepsilon) = 0,\\
&\partial_t (\varrho_\varepsilon u_\varepsilon) + \partial_x (\varrho_\varepsilon u_\varepsilon^2)
- \varepsilon \partial_x 
\Bigl(
\frac{\partial_x u_\varepsilon}{(1- |\partial_x u_\varepsilon|^2)^{1/2}}
\Bigr) 
+ a\partial_x \varrho_\varepsilon^\gamma = 0.
\end{aligned}
\right.\label{Sing1}
\end{equation}
in a periodic setting in space with initial conditions
\begin{equation}
(\varrho_\varepsilon,u_\varepsilon)\vert_{t=0} 
= (\varrho_{\varepsilon,0},u_{\varepsilon,0}).
\label{Sing2}
\end{equation}
Additionally, the solution satisfies
\begin{equation}
\int_{\T} \varrho_\varepsilon dx = \int_{\T} \varrho_{\varepsilon, 0} dx = M_{\varepsilon,0},
\qquad
\int_{\T} \varrho_\varepsilon u_\varepsilon dx = 
\int_{\T} \varrho_{\varepsilon,0} u_{\varepsilon,0} dx
= \widetilde M_{\varepsilon,0}.
\label{Sing3} 
\end{equation}

Note that such system was also considered in the 3-dimensional case in \cite{FLM}.
We deduce from \cite{FLM} that
$|\partial_x u_{\varepsilon}| < 1$ almost everywhere. Note that the authors consider the initial density far from zero and bounded which remains true for all time.

\bigskip
In this setting, we are able to prove the following:
\begin{thm}\label{MainTh_singular} 
Let us consider $(\varrho_{\varepsilon,0},u_{\varepsilon,0})$ in 
$L^{\infty}(\T)\times W^{1,\infty}(\T)$ such that:
\[
\left\{
\begin{array}
[c]{l}%
\displaystyle 
\overline{E_{0}}:=\sup_{\varepsilon>0}\int_{\T}\left(  \frac{\varrho_{\varepsilon,0}u_{\varepsilon,0}^{2}%
}{2}+\frac{\varrho_{\varepsilon,0}^{\gamma}}{\gamma-1}\right)  \mathrm{d}x<\infty.\\
\\
\displaystyle 
0<c_{1}\leq\varrho_{\varepsilon,0}\leq c_{2}<+\infty 
\text{ and }\left\vert \partial_{x}u_{\varepsilon,0}\right\vert < 1,\\
\\
\displaystyle 
\varrho_{\varepsilon,0}\to\varrho_{0},\text{ }u_{\varepsilon,0}\to
u_{0}\text{ strongly in }L^{\infty}(\T)
\end{array}
\right.
\]
with $\gamma>1$ for some finite constants $c_{1},c_{2}>0$. Then, there exists a global
weak solution of \eqref{Sing1}--\eqref{Sing3}  such that the following energy inequality holds
\[
\sup_{t\in(0,T]}\int_{\T}\left[\frac{1}{2}\varrho_{\varepsilon}|u_{\varepsilon}|^{2}+\frac{1}%
{\gamma-1}\varrho_{\varepsilon}^{\gamma}\right](t)\mathrm{d}x+
\varepsilon
\int_{0}^{t}\int_{\T}
\frac{|\partial_x u_\varepsilon|^2}{\sqrt{1-|\partial_x u_\varepsilon|^2}}
\leq\overline
{E_{0}}.%
\]
with uniform constants with respect to $\varepsilon$. 
Moreover, we have
$$|\partial_x u_\varepsilon| < 1 \hbox{ almost everywhere }.$$
Note that we also have
$$\int_0^T\int_{\T} \varrho_\varepsilon |\dot u_\varepsilon|^2
\le C$$
with $\dot u_\varepsilon = \partial_t u_\varepsilon +
 u_\varepsilon\partial_x  u_\varepsilon$ and where $C$ is a constant independent on $\varepsilon.$
Moreover 
$$\varrho_\varepsilon \to \varrho \hbox{ in } {\mathcal C}([0,T]; L^r(\T)) \hbox{ for all } r<+\infty.$$
$$u_\varepsilon \rightharpoonup  u \hbox{ in } {\mathcal C}([0,T];L^2(\T)) \cap L^r(0,T;W^{1,r}(\T))
  \hbox{ for all } r<+\infty$$
  $$
  u_\varepsilon\to u \hbox{ in } L^2((0,T)\times (\T)
  $$
$$\partial_t u_p \rightharpoonup \partial_t u
   \hbox{ in } L^2((0,T)\times \T)$$
$$\varepsilon \partial_x u_\varepsilon / \sqrt{1-|\partial_x u_\varepsilon|^2}
\rightharpoonup \tau \hbox{ in } L^2((0,T)\times \T).
$$
where 
\begin{equation}
\left\{
\begin{array}
[c]{r}%
\partial_{t}\varrho +\partial_{x}\left(  \varrho u\right)  =0,\\
\partial_{t}\left(  \varrho u\right)  +\partial_{x}\left(  \varrho
u^{2}\right)  - \partial_{x}\tau + a \partial_{x}\varrho^{\gamma}=0, \\
|\partial_x u |\le  1 \qquad  \hbox{ and } \qquad  \tau = \pi \partial_x u \hbox{ with }
\pi \ge 0 
\hbox{ with } 
\pi (1-|\partial_x u |) = 0,  \\
\varrho\vert_{t=0} = \varrho_0, \qquad
\varrho u\vert_{t=0}= \varrho_0 u_0
\end{array}
\right.\label{limit-system_sing}
\end{equation}

\end{thm}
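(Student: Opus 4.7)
The plan is to follow the three-step strategy used for Theorems \ref{MainTh1} and \ref{MainTh12}, replacing the power-law stress $\mu|\partial_x u_p|^{p-2}\partial_x u_p$ by the singular stress $\tau_\varepsilon := \varepsilon\,\partial_x u_\varepsilon/\sqrt{1-|\partial_x u_\varepsilon|^2}$. Existence of global weak solutions at fixed $\varepsilon$, together with the a.e.\ bound $|\partial_x u_\varepsilon|<1$, is provided by \cite{FLM}, so it suffices to derive $\varepsilon$-uniform estimates, extract a subsequence, and identify the unilateral complementarity in the limit.

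\textbf{Uniform estimates.} Testing the momentum equation by $u_\varepsilon$ yields the stated energy inequality. For the density bounds I would adapt the Basov--Shelukhin effective-flux argument used in Section \ref{1d_section}: defining
\[
\psi_\varepsilon(t,x) = \int_0^t(\varrho_\varepsilon u_\varepsilon^{2} - \tau_\varepsilon + a\varrho_\varepsilon^{\gamma})\,\dd s - \int_{\T}\Bigl(\int_y^x \varrho_{\varepsilon,0} u_{\varepsilon,0}\,\dd z\Bigr)\dd y,
\]
one has $\partial_t\psi_\varepsilon + u_\varepsilon \partial_x\psi_\varepsilon = -\tau_\varepsilon + a\varrho_\varepsilon^{\gamma}$, and the same computation on $\varrho_\varepsilon^\mu \exp(-\psi_\varepsilon)$ produces an upper bound. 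Here the situation is simpler than in the power-law case because the constraint $|\partial_x u_\varepsilon|<1$ is built into the model, so the negative part of $\partial_x u_\varepsilon$ is automatically controlled. The lower bound on $\varrho_\varepsilon$ follows from Lagrangian coordinates and the ODE $\partial_t\varrho_\varepsilon(t,X_t)\ge -\varrho_\varepsilon(t,X_t)$. A Hoff-type estimate, obtained by multiplying the momentum equation by $\dot u_\varepsilon := \partial_t u_\varepsilon + u_\varepsilon\partial_x u_\varepsilon$ and exploiting the convex potential $F(s)=\varepsilon(1-\sqrt{1-s^{2}})$, whose derivative $F'(s)=\varepsilon\,s/\sqrt{1-s^{2}}$ produces the stress, yields $\int_0^T\!\int_{\T}\varrho_\varepsilon|\dot u_\varepsilon|^{2}\le C$ uniformly, exactly as in Proposition \ref{prop_hoff}. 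Finally, using $\partial_x\tau_\varepsilon = \varrho_\varepsilon\dot u_\varepsilon + a\partial_x\varrho_\varepsilon^{\gamma}$ together with the periodic mean-value observation that for each $t$ there exists $x(t)$ with $\partial_x u_\varepsilon(t,x(t))=0$, and hence $\tau_\varepsilon(t,x(t))=0$, integration from $x(t)$ to $x$ provides a uniform $L^{2}_{t}L^{\infty}_{x}$ bound on $\tau_\varepsilon$, which is essential for the identification step.

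\textbf{Compactness and strong convergence of $\varrho_\varepsilon$.} From these estimates Aubin--Lions yields $u_\varepsilon\to u$ strongly in $L^{2}$, $\varrho_\varepsilon\rightharpoonup^{*}\varrho$ in $L^{\infty}$, $\partial_t u_\varepsilon\rightharpoonup\partial_t u$ in $L^{2}$, and $\tau_\varepsilon\rightharpoonup\tau$ in $L^{2}$. I would then replicate the renormalisation argument at the end of Section \ref{1d_section}: testing the momentum equation with $u_\varepsilon$ and with $u$, subtracting, and using the monotonicity of $s\mapsto s/\sqrt{1-s^{2}}$ in place of $s\mapsto|s|^{p-2}s$, one derives a Gronwall-type inequality for
\[
X_\varepsilon(t) = \int_{\T}\bigl(\varrho_\varepsilon^{\gamma} - \varrho^{\gamma} - \gamma\varrho^{\gamma-1}(\varrho_\varepsilon-\varrho)\bigr)(t,x)\,\dd x \ge 0,
\]
whose forcing term tends to zero, and hence $\varrho_\varepsilon\to\varrho$ in $\mathcal{C}([0,T];L^{r}(\T))$ for every finite $r$.

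\textbf{Identification of $\tau$ (the main obstacle).} The constraint $|\partial_x u|\le 1$ a.e.\ is a direct consequence of $|\partial_x u_\varepsilon|<1$ and weak lower semicontinuity. The key new observation specific to the singular model is the pointwise identity
\[
|\tau_\varepsilon| - \tau_\varepsilon\,\partial_x u_\varepsilon \;=\; |\tau_\varepsilon|(1-|\partial_x u_\varepsilon|) \;=\; \varepsilon\,|\partial_x u_\varepsilon|\sqrt{\frac{1-|\partial_x u_\varepsilon|}{1+|\partial_x u_\varepsilon|}} \;\le\; \varepsilon,
\]
which shows that the $L^{1}$-gap between $|\tau_\varepsilon|$ and $\tau_\varepsilon\partial_x u_\varepsilon$ vanishes as $\varepsilon\to 0$. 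Writing $f_\varepsilon := \varrho_\varepsilon\dot u_\varepsilon + a\partial_x\varrho_\varepsilon^{\gamma} = \partial_x\tau_\varepsilon$, the compactness collected above gives
\[
\int_0^T\!\!\int_{\T}\tau_\varepsilon\,\partial_x u_\varepsilon\,\dd x\,\dd t \;=\; \langle f_\varepsilon,u_\varepsilon\rangle \;\longrightarrow\; \langle f,u\rangle \;=\; \int_0^T\!\!\int_{\T}\tau\,\partial_x u\,\dd x\,\dd t,
\]
and combining this with the lower semicontinuity $\int\!\!\int|\tau|\le\liminf\int\!\!\int|\tau_\varepsilon|$ yields $\int\!\!\int|\tau|\le\int\!\!\int\tau\,\partial_x u$. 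Since $|\partial_x u|\le 1$ forces the reverse inequality, equality must hold and $|\tau|=\tau\,\partial_x u$ pointwise. Setting $\pi:=|\tau|\ge 0$ and multiplying by $\tau/|\tau|$ where $\tau\neq 0$ yields $\tau=\pi\,\partial_x u$ and the complementarity $\pi(1-|\partial_x u|)=0$. Together with the strong convergence of $\varrho_\varepsilon$, this identifies the limit system \eqref{limit-system_sing}. The most delicate point is the passage $\langle f_\varepsilon,u_\varepsilon\rangle\to\langle f,u\rangle$, which crucially rests on the $L^{2}_{t}L^{\infty}_{x}$ bound on $\tau_\varepsilon$ established in the first step.
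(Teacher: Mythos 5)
Your proposal follows the same three-stage strategy as the paper's sketch---energy estimate, Hoff estimate and $L^2_tL^\infty_x$ bound on the stress, strong convergence of $\varrho_\varepsilon$ via monotonicity of $s\mapsto s/\sqrt{1-s^2}$, and identification of $\tau$ through the inequality $\int|\tau|\le\int\tau\,\partial_x u$---and the pointwise identity $|\tau_\varepsilon|-\tau_\varepsilon\partial_x u_\varepsilon=\varepsilon|\partial_x u_\varepsilon|\sqrt{(1-|\partial_x u_\varepsilon|)/(1+|\partial_x u_\varepsilon|)}\le\varepsilon$ is exactly the observation the paper relies on when replacing $\liminf\int|\tau_\varepsilon|$ by $\liminf\int\tau_\varepsilon\partial_x u_\varepsilon$. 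So the overall architecture is correct and matches the paper.

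One step of your outline is, however, both unnecessary and not obviously salvageable: you propose to obtain the $\varepsilon$-uniform upper bound on $\varrho_\varepsilon$ by rerunning the Basov--Shelukhin argument on $\varrho_\varepsilon^\mu\exp(-\psi_\varepsilon)$. In the singular model there is no separate coefficient $\mu$; the role of $\mu$ is played by $\varepsilon$, so the correct exponent would be $\varrho_\varepsilon^\varepsilon\exp(-\psi_\varepsilon)$, which degenerates trivially as $\varepsilon\to0$ and cannot produce a uniform bound. You already note that ``the negative part of $\partial_x u_\varepsilon$ is automatically controlled'' --- but you should take this to its logical conclusion: since $|\partial_x u_\varepsilon|<1$ a.e.\ (a built-in consequence of the singular stress, inherited from \cite{FLM}), the Lagrangian ODE $\partial_t\varrho_\varepsilon(t,X_t)=-\varrho_\varepsilon\partial_x u_\varepsilon$ immediately gives $c_1 e^{-t}\le\varrho_\varepsilon(t,\cdot)\le c_2 e^{t}$ uniformly in $\varepsilon$. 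This is precisely what the paper uses (citing \cite{FLM} for the density bounds directly), and it makes the effective-flux machinery superfluous here. Everything downstream in your argument --- the Hoff estimate, the $L^2_tL^\infty_x$ bound on $\tau_\varepsilon$ obtained by integrating $\partial_x\tau_\varepsilon=\varrho_\varepsilon\dot u_\varepsilon+a\partial_x\varrho_\varepsilon^\gamma$ in $x$ from a zero of $\partial_x u_\varepsilon$, the Gronwall argument for $X_\varepsilon(t)$, and the complementarity identification --- is in line with the paper.
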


\noindent {\bf Sketch of proof.} The proof follows the same lines as for the power-law system considered in Section \ref{1d_section}, whereas some steps are simplified due to the bound on $|\partial_xu_\varepsilon|$. Therefore we will just provide the sketch of the proof and the main ideas.

\noindent {\it Energy estimate.}
The energy estimate for such system
reads
provides the following control
$$\int_{\T} \left[\frac{1}{2} \varrho_\varepsilon |u_\varepsilon|^2 + a \varrho_\varepsilon^\gamma\right](t) 
+
\varepsilon \int_0^t \int_{\T}
\frac{|\partial_x u_\varepsilon|^2}{(1-|\partial_x u_\varepsilon|^2)^{1/2}} 
\le \overline{E_0} .
$$

\bigskip

\noindent {\it Some important properties.}
Note that we already know that
$$|\partial_x u_\varepsilon| < 1 \hbox{ a.e. }$$
Note also  that
$$\varepsilon 
\frac{|\partial_x u_\varepsilon|^2}{(1-|\partial_x u_\varepsilon|^2)^{1/2}}
= \varepsilon 
\frac{1}{(1-|\partial_x u_\varepsilon|^2)^{1/2}}
- \varepsilon 
(1-|\partial_x u_\varepsilon|^2)^{1/2}
$$
and therefore we get the uniform bound
$$ \varepsilon \int_0^t\int_{\T} 
\frac{1}{(1-|\partial_x u_\varepsilon|^2)^{1/2}} \le C.$$

\bigskip

\noindent {\it Bound related to $\varrho_\varepsilon (\dot u_\varepsilon)^2$.}
As in the case for the power law, we would have to look for an estimate on $\varrho_\varepsilon (\dot u_\varepsilon)^2$ and also we would need a strong convergence on the density.
Let us recall that 
$$\varrho_\varepsilon \dot u_\varepsilon
- \partial_x \sigma_\varepsilon = 0$$
with $\displaystyle \sigma_\varepsilon = \frac{\partial_x u_\varepsilon}{(1-|\partial_x u_\varepsilon|^2)^{1/2}}
- a\varrho_\varepsilon^\gamma$
and therefore we need to estimate
$$- \int_{\T} (\partial_t u_\varepsilon +
u_\varepsilon \partial_x u_\varepsilon)\partial_x
\Bigl(\frac{\partial_x u_\varepsilon}{(1-|\partial_x u_\varepsilon|^2)^{1/2}}
\Bigr) 
$$
and then
$$\int_{\T} a (\partial_t u_\varepsilon +
u_\varepsilon \partial_x u_\varepsilon)\partial_x \varrho_\varepsilon^\gamma
$$ 
Note first that
$$-\int_{\T} \partial_t u_\varepsilon
\partial_x\Bigl(\frac{\partial_x u_\varepsilon}{(1-|\partial_x u_\varepsilon|^2)^{1/2}}
\Bigr)
= -\frac{d}{dt} \int_{\T} (1-|\partial_x u_\varepsilon|^2)^{1/2}
$$
and secondly after calculations that
$$- \int_{\T} 
u_\varepsilon \partial_x u_\varepsilon \partial_x
\Bigl(\frac{\partial_x u_\varepsilon}{(1-|\partial_x u_\varepsilon|^2)^{1/2}}
\Bigr)
= \int_{\T} \frac{\partial_x u_\varepsilon}{\sqrt{1-|\partial_x u_\varepsilon|^2}^{1/2}}.
$$
Indeed
$$- \int_{\T} 
u_\varepsilon \partial_x u_\varepsilon \partial_x
\Bigl(\frac{\partial_x u_\varepsilon}{(1-|\partial_x u_\varepsilon|^2)^{1/2}}
\Bigr)= \int_{\T} \frac{|\partial_x u_\varepsilon|^2 
\partial_x u_\varepsilon }{(1-|\partial_x u_\varepsilon|^2)^{1/2}}
+\frac{1}{2} \int_{\T} \frac{u_\varepsilon \partial_x |\partial_x u_\varepsilon|^2)}{(1-|\partial_x u_\varepsilon|^2)^{1/2}}.
$$
and therefore
$$- \int_{\T} 
u_\varepsilon \partial_x u_\varepsilon \partial_x
\Bigl(\frac{\partial_x u_\varepsilon}{(1-|\partial_x u_\varepsilon|^2)^{1/2}}
\Bigr)= 
\int_{\T} \frac{|\partial_x u_\varepsilon|^2 
\partial_x u_\varepsilon }{(1-|\partial_x u_\varepsilon|^2)^{1/2}}
+ \int_{\T} \partial_x u_\varepsilon (1-|\partial_x u_\varepsilon|^2)^{1/2}
$$
which gives the conclusion. Recall also that as for the $p$-system 
$$ \int_{\T}  (\partial_t u_\varepsilon +
u_\varepsilon \partial_x u_\varepsilon)\partial_x \varrho_\varepsilon^\gamma
= - \frac{d}{dt} \int_{\T} \varrho_\varepsilon^\gamma 
     \partial_x u_\varepsilon
     - \int_{\T} \gamma \varrho_\varepsilon^\gamma |\partial_x u_\varepsilon|^2.
$$
Using that $|\partial_x u_\varepsilon| < 1 {\it a.e.}$ 
and using the energy estimates and hypothesis on initial data,
we get that
$$\int_0^T\int_{\T} \varrho_\varepsilon |\dot u_\varepsilon|^2 
\le C < +\infty$$
uniformly with respect to $\varepsilon$. Note that using now this information,
we get the following bound $\varepsilon \partial_x u_\varepsilon/ \sqrt{1-|\partial_x u_\varepsilon|^2} \in L^2(0,T;L^\infty(\T))$
in a similar way than for the $p$- compressible limit that means starting with the relation $$\partial_x \sigma_\varepsilon =
\varrho_\varepsilon \dot u_\varepsilon$$
and integrating in space.

\bigskip

\noindent {\it Compactness on $\varrho_\varepsilon$.}
To deduce the compactness on $\varrho_\varepsilon$, we just have to
noticed that $s\mapsto s/\sqrt{1-|s|^2}$ is monotone. Therefore we can follow the same lines than for the compactness on $\varrho_p$ using the bounds already proved previously. Therefore we can prove the strong convergence of $\varrho_\varepsilon$ to $\varrho$ in ${\mathcal C}([0,T]; L^r(\T))$ for all $r<+\infty$.

\bigskip

\noindent {\it The unilateral constraint and the limit.}
The unilateral constraint should read again
$$|\partial_x u|\le 1, \qquad
   \pi \ge 0 \hbox{ with } \pi (1-|\partial_x u|)= 0$$
or at least
   $$|\partial_x u| = 1 \hbox{ on the support of } \pi$$
   where $\pi$ is the Lagrangian multiplier associated to the maximal strain constraint $|\partial_x u| =1.$  The important property is that we know that normally 
$|\partial_x u_\varepsilon|\le 1$ 
will be satisfied a.e.  due to the singular behavior and not only on the limit $\partial_x u$. Denoting $\tau$ the weak limit of $\tau_\varepsilon
= \varepsilon |\partial_x u_\varepsilon|/(1-|\partial_x u_\varepsilon|^2)^{1/2}$
and assuming more integrability than an $L^1$ bound on 

$\tau_\varepsilon$, we can do the following formal calculation
$$\int_0^T\int_{\T} |\tau|
\le \liminf_{\varepsilon\to 0}
\varepsilon \int_0^T\int_{\T} \frac{|\partial_x u_\varepsilon|}{(1-|\partial_x u_\varepsilon|^2)^{1/2}}
=  \liminf_{\varepsilon\to 0}
\varepsilon \int_0^T\int_{\T} \frac{|\partial_x u_\varepsilon|^2}{(1-|\partial_x u_\varepsilon|^2)^{1/2}}.
$$

Then denoting $\partial_x \tau_\varepsilon 
= a\partial_x \varrho_\varepsilon^\gamma + \varrho_\varepsilon \dot u_\varepsilon
=f_\varepsilon$
with $\tau_\varepsilon = 
\varepsilon \partial_x u_\varepsilon/(1-|\partial_x u_\varepsilon|^2)^{1/2}$, we write
$$
\varepsilon \int_0^T\int_{\T} \frac{|\partial_x u_\varepsilon|^2}{(1-|\partial_x u_\varepsilon|^2)^{1/2}}
\le - <f_\varepsilon, u_\varepsilon>_{L^2(0,T;H^{-1}(\T))
\times L^2(0,T; H^1(\T))}.$$
Then using the weak convergence on $u_\varepsilon$ in $L^2(0,T;H^1(\T))$ and showing the strong convergence
in $L^2(0,T; H^{-1}(\T))$ of $f_\varepsilon$ to $\varrho \dot u + a\varrho^\gamma$. we can get
$$\int_0^T\int_{\T} |\tau|
\le \int_0^T\int_{\T}
  \tau \partial_x u$$
and therefore we get the same conclusion as in the power-law problem.

\bigskip

\paragraph{Acknowledgments.}  The first author gratefully 
acknowledges the partial support by the Agence Nationale pour la Recherche grant ANR-23-CE40-0014-01 (ANR Bourgeons). This work also benefited of the support of the ANR under France 2030 bearing the reference ANR-23-EXMA-004 (Complexflows project). The second author acknowledges the partial support by the Agence Nationale pour la Recherche grant CRISIS (ANR-20-CE40-0020-01). The third author gratefully acknowledges the Mathematical Institute of Planet Earth (IMPT) in France for a two-years post-doctoral grant and the Polish National Science Centre grant no. 2022/45/N/ST1/03900 (Preludium) for partially supporting this work.

\begin{appendices}
\section{A certain continuity property for the transport equation.}
For the reader's convenience, we present below the proof of the lemma originally presented as the Lemma 2.12 in the ArXiv preprint \url{https://arxiv.org/abs/1907.09171} (for the published version see \cite{BrBu}).

\begin{lem}
\label{transport_lemma}Consider $\varrho\in L^{\infty}((0,T)
;L^{\gamma}(\mathbb{T}^{d})) \cap C([0,T];L_{weak}%
^{\gamma}(\mathbb{T}^{d}))\cap L^{\frac{p\gamma}{p-1}}((0,T)
\times\mathbb{T}^{d}))$ and $u\in L^{p}((0,T)
;W^{1,p}(\mathbb{T}^{d})) $ for some $p>1$, verifying the transport
equation
\[
\partial_{t}\varrho+\operatorname{div}\left(  \varrho u\right)  =0\text{ in
}\mathcal{D}^{\prime}\left(  \left(  0,T\right)  \times\mathbb{T}^{d}\right)
\]
along with the fact that%
\[
\lim_{t\rightarrow0}\int_{\T^d}\varrho\left(  t,x\right)  \psi\left(
x\right)  dx=\int_{\T^d}\varrho_{0}\left(  x\right)  \psi\left(
x\right)  dx\text{ for all }\psi\in C_{per}^{\infty}(\mathbb{R}^{d}).
\]
Then
\begin{equation}
\lim_{s\rightarrow0}\frac{1}{s}\int_{0}^{s}\int_{\T^d}\left(
\varrho^{\gamma}\left(  \tau,x\right)  -\varrho_{0}^{\gamma}\left(  x\right)
\right)  d\tau dx=0. \label{time_mean_continuity}%
\end{equation}
\bigskip
\end{lem}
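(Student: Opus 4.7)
The strategy is to upgrade the linear transport equation to the renormalized initial-value identity
$$\int_{\T^d}\varrho^{\gamma}(t,x)\,\dd x-\int_{\T^d}\varrho_{0}^{\gamma}(x)\,\dd x=-(\gamma-1)\int_{0}^{t}\int_{\T^d}\varrho^{\gamma}\,\ddiv u\,\dd x\,\dd\tau\qquad\text{for a.e. }t\in(0,T),$$
and then to deduce \eqref{time_mean_continuity} by a direct Fubini computation using only that $\varrho^{\gamma}\,\ddiv u\in L^{1}((0,T)\times\T^{d})$ (which follows from Hölder, since $\varrho^{\gamma}\in L^{p/(p-1)}_{t,x}$ by the hypothesis $\varrho\in L^{p\gamma/(p-1)}_{t,x}$, and $\ddiv u\in L^{p}_{t,x}$).

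First I would employ the DiPerna--Lions mollification procedure. Setting $\varrho_{\eta}=\varrho*_{x}\omega_{\eta}$ with a standard spatial mollifier, the transport equation yields
$$\partial_{t}\varrho_{\eta}+u\cdot\nabla\varrho_{\eta}+\varrho_{\eta}\,\ddiv u=r_{\eta},$$
where $r_{\eta}$ denotes the usual commutator. Since $\varrho_{\eta}(t,\cdot)$ is smooth in $x$ and the weak trace hypothesis forces $\varrho_{\eta}(0,\cdot)=\varrho_{0}*\omega_{\eta}$, the classical chain rule applies pointwise. Multiplying by $\gamma\varrho_{\eta}^{\gamma-1}$, recasting the drift in conservative form, and integrating over $(0,t)\times\T^{d}$ gives
$$\int \varrho_{\eta}^{\gamma}(t)\,\dd x-\int(\varrho_{0}*\omega_{\eta})^{\gamma}\,\dd x=-(\gamma-1)\int_{0}^{t}\int \varrho_{\eta}^{\gamma}\,\ddiv u\,\dd x\,\dd\tau+\gamma\int_{0}^{t}\int \varrho_{\eta}^{\gamma-1}r_{\eta}\,\dd x\,\dd\tau.$$

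The main work is then to pass $\eta\to 0$. The first three terms are handled by dominated convergence, using $\varrho_{\eta}\to\varrho$ in $L^{p\gamma/(p-1)}_{t,x}$ and $\varrho_{0}*\omega_{\eta}\to\varrho_{0}$ in $L^{\gamma}$. The real obstacle is the commutator remainder $\varrho_{\eta}^{\gamma-1}r_{\eta}$. By the classical DiPerna--Lions commutator lemma, $r_{\eta}\to 0$ in $L^{r}((0,T)\times\T^{d})$ with $1/r=1/p+(p-1)/(p\gamma)$, while $\varrho_{\eta}^{\gamma-1}$ is bounded in $L^{p\gamma/((\gamma-1)(p-1))}$; the conjugate-exponent arithmetic
$$\frac{(\gamma-1)(p-1)}{p\gamma}+\frac{(p-1)+\gamma}{p\gamma}=\frac{\gamma p}{p\gamma}=1$$
shows that these two spaces are Hölder dual, so the remainder tends to zero in $L^{1}$. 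One thereby obtains the displayed renormalized identity.

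To conclude, I integrate the identity from $0$ to $s$, divide by $s$, and apply Fubini on the right-hand side:
$$\frac{1}{s}\int_{0}^{s}\int_{\T^d}\bigl(\varrho^{\gamma}(\tau,x)-\varrho_{0}^{\gamma}(x)\bigr)\dd x\,\dd\tau=-(\gamma-1)\frac{1}{s}\int_{0}^{s}(s-\tau)\int_{\T^d}\varrho^{\gamma}\,\ddiv u\,\dd x\,\dd\tau.$$
The absolute value of the right-hand side is bounded by $\int_{0}^{s}\bigl|\int_{\T^d}\varrho^{\gamma}\,\ddiv u\,\dd x\bigr|\dd\tau$, which vanishes as $s\to 0$ by absolute continuity of the Lebesgue integral of the $L^{1}$ function $\varrho^{\gamma}\,\ddiv u$. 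This proves \eqref{time_mean_continuity}.
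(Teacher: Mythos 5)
Your proof is correct, and it takes a genuinely different route than the paper after the common setup (spatial mollification and the commutator equation). You pass $\eta\to 0$ \emph{first}, obtaining the clean renormalized balance
$\int\varrho^{\gamma}(t)-\int\varrho_{0}^{\gamma}=-(\gamma-1)\int_{0}^{t}\int\varrho^{\gamma}\ddiv u$ for a.e.\ $t$ (with the commutator remainder killed by the Hölder-dual pairing you check), and then deduce the time-mean continuity by the Fubini identity $\int_{0}^{s}\int_{0}^{t}g=\int_{0}^{s}(s-\tau)g(\tau)\,\dd\tau$ together with absolute continuity of the integral of the $L^{1}$ function $\tau\mapsto\int\varrho^{\gamma}\ddiv u$. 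The paper, by contrast, never sends the mollification parameter to zero inside the renormalized identity; it instead keeps $\varepsilon$ and $t$ alive simultaneously, observes that the remainder $h_{\varepsilon}(t)$ is dominated \emph{uniformly in} $\varepsilon$ by an $L^{1}$ function $h(t)=C_{\gamma}\|\varrho(t)\|^{\gamma}_{L^{p\gamma/(p-1)}}\|\nabla u(t)\|_{L^{p}}$, fixes $\eta>0$ and a small time $t_{\eta}$ so that $\int_{0}^{t_{\eta}}h\leq\eta$, and only then passes $\varepsilon\to 0$ at fixed $t<t_{\eta}$. Your argument is more modular — it isolates the renormalized identity as a standalone fact and then treats the mean-continuity as an elementary calculus corollary — at the price of needing the full strength of the DiPerna--Lions commutator \emph{convergence} (not just the bound). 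The paper's argument needs only the commutator \emph{bound} to build the dominating $h$, and swaps the order of limits, which makes the two-parameter bookkeeping slightly heavier but the analytic input slightly lighter. Both are valid proofs of the same lemma.

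One small point worth making explicit in your write-up: passage to the limit in the term $\int\varrho_{\eta}^{\gamma}(t)\,\dd x$ is only along a subsequence for a.e.\ $t$ (from the $L^{p\gamma/(p-1)}_{t,x}$ convergence), so the renormalized identity holds a.e.\ in $t$ — which you implicitly use, and which is enough since you immediately integrate in $t$.
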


\begin{proof} First of all, it is
classical to recover that $\varrho\in C(  [0,T);L^{q}(\mathbb{T}%
^{d}))$ with $q\in\lbrack1,\gamma)$ and that%
\begin{equation}
\lim_{t\rightarrow0}\varrho\left(  t,\cdot\right)  =\varrho_{0}\text{ in }L^{p}\text{ for
all }p\in\lbrack1,\gamma). \label{weak_time_cont}%
\end{equation}
This is of course not sufficient in order to prove $\left(
\text{\ref{time_mean_continuity}}\right)  $. Let us consider a spatial
approximation of the identity $\left(  \omega_{\varepsilon}\right)
_{\varepsilon>0}=\left(  \frac{1}{\varepsilon^{3}}\omega\left(  \frac{\cdot
}{\varepsilon}\right)  \right)  _{\varepsilon>0}$. We will denote by
\[
\varrho_{\varepsilon}\left(  t,x\right)  =\omega_{\varepsilon}\ast\varrho\left(
t,x\right)  .
\]
We have that%
\[
\lim_{\varepsilon\rightarrow0}\left\Vert \varrho-\varrho_{\varepsilon}\right\Vert
_{L^{\frac{p\gamma}{p-1}}((0,T)\times\T^d)}=0.
\]
Moreover, using \ref{weak_time_cont} for all $\varepsilon>0$ we have that%
\begin{equation}
\lim_{t\rightarrow0}\varrho_{\varepsilon}\left(  t,\cdot\right)  =\omega
_{\varepsilon}\ast\varrho_{0}\text{ in }L^{\gamma}. \label{weak_time_continuity}%
\end{equation}
For example,
\[
\left\Vert \varrho_{\varepsilon}\left(  t,\cdot\right)  -\omega_{\varepsilon}%
\ast\varrho_{0}\right\Vert _{L^{\gamma}}\leq\left\Vert \omega_{\varepsilon
}\right\Vert _{L^{p\left(  \eta\right)  }(\T^d)}\left\Vert
\varrho\left(  t,\cdot\right)  -\varrho_{0}\right\Vert _{L^{\gamma-\eta}%
(\T^d)}.
\]
Next, we apply $\omega_{\varepsilon}$ for the transport equation such as to
obtain
\begin{equation}
\partial_{t}\varrho_{\varepsilon}^{\gamma}+\operatorname{div}\left(
\varrho_{\varepsilon}^{\gamma}u\right)  +\left(  \gamma-1\right)  \varrho
_{\varepsilon}^{\gamma}\operatorname{div}u=\gamma\varrho_{\varepsilon}^{\gamma
-1}r_{\varepsilon}\text{ in }\mathcal{D}^{\prime}\left(  \left(  0,T\right)
\times\T^d\right)  \label{asterix}%
\end{equation}
with
\[
r_{\varepsilon}\rightarrow0\text{ in }L^{\frac{p\gamma}{\gamma+p-1}}\left(
\left(  0,T\right)  \times\T^d\right)  .
\]
An important property is that for all $\varepsilon>0$ and a.e. $t\in\left(
0,T\right)  $ it holds true that
\begin{align}
h_{\varepsilon}\left(  t\right)   &  =\int_{\T^d}\gamma
\varrho_{\varepsilon}^{\gamma-1}\left(  t\right)  r_{\varepsilon}\left(
t\right)  -\left(  \gamma-1\right)  \varrho_{\varepsilon}^{\gamma}\left(
t\right)  \operatorname{div}u\left(  t\right)  .\nonumber\\
&  \leq\left(  \gamma-1\right)  \int_{\T^d}\varrho_{\varepsilon
}^{\gamma}\left(  t\right)  \left\vert \operatorname{div}u\left(  t\right)
\right\vert +\gamma\int_{\T^d}\varrho_{\varepsilon}^{\gamma
-1}\left\vert r_{\varepsilon}\right\vert \nonumber\\
&  \leq C_{\gamma}\left\Vert \varrho\left(  t\right)  \right\Vert _{L^{\frac{p\gamma}{p-1}
}(\T^d)}^{\gamma}\left\Vert \nabla u\left(  t\right)  \right\Vert
_{L^{p}(\T^d)}:=h\left(  t\right)  \in L^{1}\left(  0,T\right)  .
\label{important_estimate}%
\end{align}
Integrating the $\left(  \text{\ref{asterix}}\right)  $ we end up with%
\[
\frac{d}{dt}\int_{\T^d}\varrho_{\varepsilon}^{\gamma}\left(
t,x\right)  dx=h_{\varepsilon}\left(  t\right)  \in L^{1}\left(  0,T\right)
.
\]
But using $\left(  \text{\ref{weak_time_continuity}}\right)  $ along with the
last relation we obtain that the application $t\rightarrow$ $\int
_{\T^d}\varrho_{\varepsilon}^{\gamma}\left(  t\right)  $ is absolutely
continious and we may write that%
\[
\int_{\T^d}\varrho_{\varepsilon}^{\gamma}\left(  t,x\right)
dx=\int_{\T^d}(\omega_{\varepsilon}\ast\varrho_{0})^{\gamma}\left(
x\right)  dx+\int_{0}^{t}h_{\varepsilon}\left(  \tau\right)  d\tau.
\]
From this and $\left(  \text{\ref{important_estimate}}\right)  $ we learn that%
\[
\left\vert \int_{\T^d}\varrho_{\varepsilon}^{\gamma}\left(  t,x\right)
dx-\int_{\T^d}(\omega_{\varepsilon}\ast\varrho_{0})^{\gamma}\left(
x\right)  dx\right\vert \leq\int_{0}^{t}h\left(  \tau\right)  d\tau.
\]
Now, we know that $h\left(  t\right)  \in L^{1}\left(  0,T\right)  $ and
consequently the application $t\rightarrow\int_{0}^{t}h\left(  \tau\right)
d\tau$ is absolutely continuous and
\[
\lim_{t\rightarrow0}\int_{0}^{t}h\left(  \tau\right)  d\tau=0.
\]
Let us fix $\eta>0$. Using the above we obtain the existence of a $t_{\eta}>0$
such that for all $t\in\left(  0,t_{\eta}\right)  $ \textit{and for all
}$\varepsilon>0$ one has%
\[
\left\vert \int_{\T^d}\varrho_{\varepsilon}^{\gamma}\left(  t,x\right)
dx-\int_{\T^d}(\omega_{\varepsilon}\ast\varrho_{0})^{\gamma}\left(
x\right)  dx\right\vert \leq\int_{0}^{t}h\left(  \tau\right)  d\tau\leq\eta.
\]
By the triangle inequality, we have that for all $\varepsilon>0$ and
$t\in\left(  0,t_{\eta}\right)  $%
\begin{equation}
\left\vert \frac{1}{t}\int_{0}^{t}\int_{\T^d}\varrho_{\varepsilon
}^{\gamma}\left(  \tau,x\right)  dxd\tau-\int_{\T^d}(\omega
_{\varepsilon}\ast\varrho_{0})^{\gamma}\left(  x\right)  dx\right\vert \leq\eta.
\label{almost_almost}%
\end{equation}
For $t$ fixed arbitrarly in $\left(  0,t_{\eta}\right)  $ we use the fact that%
\[
\lim_{\varepsilon\rightarrow0}\left\Vert \varrho-\varrho_{\varepsilon}\right\Vert
_{L^{\frac{p\gamma}{p-1}}((0,T)\times\T^d)}=0
\]
and we pass to the limit in $\left(  \text{\ref{almost_almost}}\right)  $ in
order to obtain that for all $t\in\left(  0,t_{\eta}\right)  $%
\[
\left\vert \frac{1}{t}\int_{0}^{t}\int_{\T^d}\varrho^{\gamma}\left(
\tau,x\right)  dxd\tau-\int_{\T^d}\varrho_{0}^{\gamma}\left(  x\right)
dx\right\vert \leq\eta.
\]
Since $\eta$ was fixed arbitrarly, the last property translates that
\[
\lim_{t\rightarrow0}\frac{1}{t}\int_{0}^{t}\int_{\T^d}\varrho^{\gamma
}\left(  \tau,x\right)  dxd\tau=\int_{\T^d}\varrho_{0}^{\gamma}\left(
x\right)  dx.
\]
This concludes the proof of Lemma \ref{transport_lemma}.
\end{proof}
\end{appendices}


\bibliographystyle{abbrv}
\bibliography{biblio}

\end{document}